\newcommand{\q}{\quad}
\newcommand{\ee}{{\rm e}\hspace{1pt}}
\newcommand{\dd}{\hspace{0.5pt}{\rm d}\hspace{0.5pt}}
\newcommand{\real}{\hspace{0.5pt}{\rm Re}\hspace{0.5pt}}
\newcolumntype{R}[1]{>{\raggedleft\let\newline\\\arraybackslash\hspace{0pt}}m{#1}}
\newcolumntype{L}[1]{>{\raggedright\let\newline\\\arraybackslash\hspace{0pt}}m{#1}}
\newcolumntype{C}[1]{>{\centering\let\newline\\\arraybackslash\hspace{0pt}}m{#1}}
\definecolor{orange}{rgb}{1,0,0}
\journal{}
\numberwithin{equation}{section}
\newtheorem{lemma}[]{Lemma}
\numberwithin{lemma}{section}
\newtheorem{theorem}[]{Theorem}
\numberwithin{theorem}{section}
\numberwithin{example}{section}
\newcommand{\overbar}[1]{\mkern 1.5mu\overline{\mkern-1.5mu#1\mkern-1.5mu}\mkern 1.5mu}
\begin{document}
\begin{frontmatter}

\title{{\bf Preconditioned Implicit-Exponential (IMEXP) Time Integrators for Stiff  Differential Equations }}
\author{Vu Thai Luan \corref{cor1}}
\ead{vluan@ucmerced.edu}
\author{Mayya Tokman}
\ead{mtokman@ucmerced.edu}
\author{Greg Rainwater }
\ead{grainwater@ucmerced.edu} 
\address{School of Natural Sciences, University of California, Merced, 5200 North Lake Road,\\
Merced, CA 95343, USA}
\cortext[cor1]{Corresponding author}

\begin{abstract}
\small  
\indent  We propose two new classes of time integrators for stiff DEs: the implicit-explicit exponential (IMEXP) and the hybrid exponential methods. In contrast to the existing exponential schemes, the new methods offer significant computational advantages when used with preconditioners. Any preconditioner can be used with any of these new schemes. This leads to a broader applicability of exponential methods. The proof of stability and convergence of these integrators and numerical demonstration of their efficiency are presented. 
 
\small  
\end{abstract}

\begin{keyword}
\small 
 Exponential integrators  \sep Stiff differential equations \sep Implicit-explicit exponential \sep Preconditioner 
\end{keyword}

\end{frontmatter}
\section{Introduction}
Many problems in science and engineering are characterized by the presence of a wide range of spatial and temporal scales in the phenomenon under investigations.  Complex interactions of numerous processes evolving on different scales can cause the differential equations describing the evolution of the system to be stiff.   Solving such stiff large systems of differential equations numerically is a challenging task.  In particular, in many applications very large systems of partial or ordinary differential equations have to be solved numerically over very long time intervals compared to the fastest processes in the system.  Development of an efficient time integrator that enables simulation of such system in a reasonable time requires much effort and care since standard methods can be too computationally expensive.  A custom designed time integrator which exploits the structure of the problem and the source of stiffness can bring the necessary computational savings that enable simulation of the problem in the parameter regimes of interest.  In this paper we address a class of initial value problems which can be written in the form 
\begin{equation}
u'(t)=F(u(t))= L(u(t)) + N(u(t)), \qquad u(t_0)=u_0,  \label{eq1.1}
\end{equation}
where both differential operators $L$ and $N$ can be stiff. Often $L(u(t)) = Lu(t)$ is a linear operator that represents, for instance diffusion.  If $N(u(t))$ is not a stiff operator, equations of type \eqref{eq1.1} are usually solved using implicit-explicit (IMEX) integrators.  IMEX schemes have been widely used in a variety of fields with some of the earlier applications coming from fluid dynamics in conjunction with spectral methods \cite{canuto87, kim85}. An example of one of the most widely used, particularly in the context of large-scale applications, IMEX schemes, is the second order BDF-type method (we will call it here 2-sBDF) which was proposed in \cite{AscherRuuth}, one of the first publications where IMEX methods were systematically analyzed and derived.  Over the past decades a range of IMEX schemes have been introduced such as, for example, linear multistep \cite{AscherRuuth, frank1997}, Runge-Kutta \cite{ascher1997} and extrapolated \cite{constantinescu2010} IMEX methods.  Such schemes have proven to be very efficient for problems such as advection-diffusion equations or reaction-diffusion equations where advection or reaction are slow and diffusion is occurring on a fast time scale.  Diffusion in this case is treated implicitly while advection or reaction terms are treated with an explicit method. The IMEX methods are particularly efficient if a good preconditioner is available to speed up convergence of an iterative method used in the implicit solver.  Construction of an efficient preconditioner is the topic of extensive research and software development; frequently the majority of time spent on development and implementation of an IMEX scheme for a large scale problem goes to creating a preconditioner \cite{wathen15}.  The complexity of a differential operator that has to be preconditioned is directly related to the difficulty in constructing an efficient preconditioner.  For example, a large number of preconditioners have been developed for a Laplacian operator which models linear diffusion process.  

While IMEX schemes work well if $L$ is a stiff operator and $N$ is not, in many applications both of these terms introduce stiffness.  Such problems arise from a wide range of fields, from electrochemistry \cite{SBL12} to combustion \cite{bisetti} and plasma physics \cite{tokman2001}.  A reaction-diffusion system describing 
chemical kinetic mechanisms involved in ignition of different chemical mixtures can involve thousands of reactions occurring over a wide range of time scales comparable to those of the diffusive processes in the system \cite{bisetti}. Similar structure can be encountered in models of electrochemical material growth where for certain parameter regimes the reactive terms can be as stiff as the diffusive operators in the equations \cite{SBL12}.  In magnetohydrodynamic (MHD) equations describing the large scale plasma behavior, stiffness arises from the presence of a wide range of waves encoded in the complex nonlinear terms of the system \cite{goedbloed2004}.  While IMEX or closely-related semi-implicit integrators are typically used for these problems, their performance suffers. The stiffness of the nonlinear operator $N(u(t))$ which is treated explicitly imposes prohibitive stability restrictions on the time step size.  Abandoning IMEX approach in this case and using a method that treats $N(u(t))$ implicitly as well, also poses a computational challenge.  Operator $N(u(t))$ can be very complex and development of an efficient preconditioner to enable implicit treatment of this term might be difficult or even impossible.  

Recently exponential integrators emerged as an efficient alternative to standard time integrators for solving stiff systems of equations. It has been shown that exponential time integrators can offer significant computational savings particularly for large scale stiff systems, particularly in comparison to implicit Newton-Krylov methods \cite{hochbruckexp4, Tok06,  HOS09, HO10, TLP12, LO14a, LO16, loffeld14}.  However, such comparisons are valid for problems where no efficient preconditioner is available for the implicit Newton-Krylov integrators.  A shortcoming of the exponential integrators is that, at present, there are no algorithms that can utilize preconditioners in a way that makes them clearly competitive with the preconditioned implicit Newton-Krylov methods.  In this paper we present a new class of implicit-exponential (IMEXP) methods which can both -- take advantage of efficient preconditioners developed for given operators and improve computational efficiency for problems where both operators $L$ and $N$ in \eqref{eq1.1} are stiff.  The idea of combining an implicit treatment of operator $L$ and an exponential approach to integrating term $N$ was first proposed in \cite{tokmanOberwolfach} where a classically accurate second order IMEXP method was constructed.  Here we expand this approach to derive several types of IMEXP integrators and provide derivation of stiffly accurate schemes along with the convergence theory for these methods.  While we propose two main classes of integrators -- IMEXP Runge-Kutta and Hybrid IMEXP schemes -- the ideas behind these methods can be used to construct many additional schemes that would address particular structure of the problem \eqref{eq1.1}. 

The paper is organized as follows.  Section 2 outlines the main ideas behind construction of IMEXP schemes and presents the analytical framework that enables us to derive the stiff order conditions and to prove stability and convergence of the schemes.  Construction and analysis of IMEXP Runge-Kutta methods is presented in Section 3 and development of hybrid IMEXP schemes is the focus of Section 4.  Section 5 contains numerical experiments that validate theoretical results and illustrate computational savings that IMEXP methods can bring compared to IMEX schemes for problems with stiff $N$.

\section{Construction and analytical framework for analysis of the IMEXP methods.}
We begin construction of IMEXP methods by considering the general EPIRK framework introduced in \cite{Tok11}.  The exponential propagation iterative methods of Runge-Kutta type (EPIRK) to solve \eqref{eq1.1} can be written as 
\begin{equation}
\label{eqn:genEPIRK}
  \begin{aligned}
U_{ni}&= u_n+\alpha_{i1}\psi_{i1}(g_{i1}h_nA_{i1})h_nF(u_n)+h_n\sum_{j=2}^{i-1} \alpha_{ij}\psi_{ij}(g_{ij}h_nA_{ij})D_{nj},\quad i=2,\dots,s,\\
u_{n+1} &= u_n +\beta_{1}\psi_{s+1,1}(g_{s+1,1}h_nA_{s+1,1})h_nF(u_n)+h_n\sum_{j=2}^s \beta_j\psi_{s+1j}(g_{s+1j}h_nA_{s+1,j})D_{n,s+1},
  \end{aligned}
\end{equation}
where $u_n$ is an approximation to the solution of \eqref{eq1.1} at time $t_n = t_0+\sum_{i=1}^n h_i$. 
As explained in \cite{Tok11, tokmanOberwolfach, rainwater2016}, different choices for functions $\psi_{ij}$, matrices $A_{ij}$ and vectors $D_{nj}$ result in different classes of EPIRK methods. 
To construct IMEXP methods we can use the flexibility of EPIRK framework and choose $\psi_{ij}$, $A_{ij}$ and $D_{nj}$ to address the structure of the problem \eqref{eq1.1}.  Namely, we construct methods of two types -- IMEXP Runge-Kutta and hybrid IMEXP schemes.  The main idea behind both classes of methods is to choose some of the functions $\psi_{ij}$ to be rational functions, similar to implicit or IMEX methods. The remaining $\psi_{ij}$ are then set as linear combination of the exponential functions $\varphi_k(z)$ defined by 
\begin{equation} \label{eq2.2}
\varphi_{0}(z)=\ee^z, \q
\varphi_{k}(z)=\int_{0}^{1} \ee^{(1-\theta )z} \frac{\theta ^{k-1}}{(k-1)!}\,\text{d}\theta , \quad k\geq 1.
\end{equation}
The arguments $A_{ij}$ of these functions can be chosen as either the full Jacobian $J_n=DF(u_n)=F'(u_n)=L'(u_n) + N'(u_n)=L'_n + N'_n$  or its components $L'$ or $N'$.  As noted in \cite{Tok11, rainwater14, rainwater2016} vectors $D_{nj}$ could be node values, differences or forward differences $\Delta^{j}$  constructed using the remainder functions such as   $F(u) - F(u_n)- F'(u_n)(u-u_n)$ or its component $N(u)$.  In order to simplify analysis we will restrict our choice to $D_{nj}$ being the difference constructed using the nonlinearity $N(u)$ as explained below.  In the next section we present three classes of methods that are defined by these choices of functions $\psi_{ij}$ and their arguments $A_{ij}$.  

\subsection{Construction of IMEX Runge-Kutta methods}

The IMEXP Runge-Kutta methods are designed for problems of type
\begin{equation} \label{eqIMEXPRK}
u'(t)=F(u(t))= Lu(t) + N(u(t)), \qquad u(t_0)=u_0,
\end{equation}
where $L$ is a stiff linear operator and the nonlinear operator $N(u(t))$ is either nonstiff or mildly stiff compared to $L$.  In this case we choose 
\begin{align}
&\psi_{i1}(z)=R_{i1}(z)=\frac{\sum_{k=0}^{m_i} p_{ik}z^{k}}{\sum_{k=0}^{n_i} q_{ik} z^k}  , \quad i = 1,...,s \\
&\psi_{s+1,1}(z) = R_1(z)=\frac{\sum_{k=0}^{m}  p_{k}z^{k}}{\sum_{k=0}^{n} q_{k} z^k},   \label{eqRni}
\end{align}
i.e. where $R_{i1}(z)$ and $R_1(z)$ are rational functions.  The remaining functions $\psi_{ij}(z)$, $j \ge 2$ are chosen to be linear combinations of $\varphi_k(z)$ functions defined in \eqref{eq2.2}.  To simplify the notation, and in anticipation of our analysis approach described below, we denote 
\begin{equation}
\begin{aligned} \label{eqab}
&a_{ij}(z) = \alpha_{ij} \psi_{ij}(z), \quad j = 2,...,s, \quad i = 1,...,s, \\
&b_j(z) = \beta_j \psi_{s+1,j}(z), \quad j = 2,...,s. 
\end{aligned}
\end{equation}
Since $L$ is the main source of stiffness in this type of problems we use it as the argument of all functions $\psi_{ij}(z)$.  The remainder, or defect, vectors $D_{ni}$ are chosen as 
\begin{equation}
D_{ni} = N(U_{ni}) - N(u_n).
\end{equation}
Here we consider the constant time step version of the method and set $h_n=h$.  The resulting general form of the IMEXP Runge-Kutta schemes is then given by 
\begin{subequations} \label{eq2.5}
\begin{align}
 U_{ni}&= u_n + c_i h R_{1i} ( c_i h L)F(u_n) +
 h \sum_{j=2}^{i-1}a_{ij}(h L) D_{nj}, \  2\leq i\leq s,  \label{eq2.5a} \\
u_{n+1}& = u_n + h R_{1} ( h L)F(u_n) + h \sum_{i=2}^{s}b_{i}(h L) D_{ni},  \label{eq2.5b}
\end{align}
\end{subequations}
where we have also used the simplifying assumptions 
\begin{subequations} \label{agc}
\begin{align}
&\alpha_{i1} = g_{i1} = c_i \label{agc_a} \\
&\alpha_{s+1,1}=g_{i1} = 1 \label{agc_b}.
\end{align}
\end{subequations}
\eqref{agc_b} is motivated by Lemma 4 in \cite{rainwater2016} which shows that this requirement is a necessary condition to satisfy the stiff order conditions.  As shown in \cite{rainwater2016}, assumption \eqref{agc_a} can potentially be relaxed to derive more methods, but we choose it for simplicity of subsequent analysis.  

\subsection{Construction of hybrid IMEXP methods}
We construct the hybrid IMEXP methods to address problems \eqref{eqIMEXPRK} where both operator $L$ and the nonlinearity $N(u(t))$ are stiff.   Since the operator $N(u)$ is stiff, it is desirable to treat $N'(u)$ in an implicit or exponential way.  We assume that implicit treatment is difficult in this case due to the lack of a readily available efficient preconditioner  which is crucial to making the implicit method sufficiently fast. Therefore we will use the exponential-type approach in constructing the integrator. We note that for stability reasons the product of the Lipschitz constant of  $N(u(t))$ and the time step must be sufficiently small. To improve the stability of an integrator, one can use the idea of linearizing $F(u)$ continuously along the numerical solution, which can make coefficients of leading error terms become smaller in each integration step. This motivates two possibilities-- either we use the full Jacobian 
\begin{equation}
J_n = J(u_n)=F'(u_n) = L + N'(u_n)=L + N'_n
\end{equation}
or just its nonlinear part $N'(u_n)$ as an argument for functions $a_{ij}(z)$ and $b_j(z)$ in \eqref{eqab}.  As before we assume that a rational function of the stiff operator $L$ can be computed efficiently, e.g. due to the availability of an efficient preconditioner.  Thus we get a class of hybrid IMEXP methods that can be written as 
\begin{subequations} \label{eq2.7}
\begin{align}
 U_{ni}&= u_n + c_i h R_{1i} ( c_i h L)F(u_n) +
 h \sum_{j=2}^{i-1}a_{ij}(h J_n) D_{nj}, \  2\leq i\leq s,  \label{eq2.7a} \\
u_{n+1}& = u_n + h R_{1} ( h L)F(u_n) + h \sum_{i=2}^{s}b_{i}(h J_n) D_{ni}  \label{eq2.7b}
\end{align}
with
\begin{equation} \label{eq2.7c}
  D_{ni}= N( U_{ni})- N(u_n ), \qquad  2\leq i\leq s.
\end{equation}
\end{subequations}
If the nonlinear portion of the Jacobian $N'(u)$ is used we get a modified hybrid IMEXP method
\begin{subequations} \label{eq2.8}
\begin{align}
 U_{ni}&= u_n + c_i h R_{1i} ( c_i h L)F(u_n) +
 h \sum_{j=2}^{i-1}a_{ij}(h N'_n) D_{nj}, \  2\leq i\leq s,  \label{eq2.8a} \\
u_{n+1}& = u_n + h R_{1} ( h L)F(u_n) + h \sum_{i=2}^{s}b_{i}(h N'_n) D_{ni}  \label{eq2.8b}
\end{align}
with
\begin{equation} \label{eq2.8c}
  D_{ni}= N( U_{ni})- N(u_n ), \qquad  2\leq i\leq s.
\end{equation}
\end{subequations}
It is possible that scheme \eqref{eq2.8} can be beneficial in cases where evaluation of $N'(u)v$ for a vector $v$ is significantly less computationally expensive than computing a product of a full Jacobian with a vector $J(u)v$. However, as we will see from the numerical experiments this has to be counterbalanced by some loss of accuracy compared to scheme \eqref{eq2.7}

\subsection{Analytical framework for derivation of the stiff order conditions and convergence analysis}
A closely related class of methods to the EPIRK schemes is the exponential Runge--Kutta integrators \cite{HO05, LO12b, LO14b}.  Exponential Runge--Kutta methods can be viewed as a special case of EPIRK schemes with simplifying assumptions on the coefficients (see \cite{rainwater14}). A theory of stiff order conditions and convergence analysis for these methods were developed in a series of papers \cite{HO05, LO12b, LO14b}.  The key idea in our derivation of the stiff order conditions and proof of convergence for the IMEXP schemes is their interpretation as a perturbation of the exponential Runge--Kutta methods.  This approach allows us to derive estimates for the error of the IMEXP methods, prove their stability and convergence.  

As for exponential Runge--Kutta methods, the convergence analysis of IMEXP schemes \eqref{eq2.5}, \eqref{eq2.7} and \eqref{eq2.8} can be carried out in the abstract framework of strongly continuous semigroups in a Banach space $X$ with norm $\| \cdot \|$ (for instance, see \cite{EN2000,PAZY83}). As usual in exponential integrators, this framework allows us to handle stiff problems. In particular, throughout the paper we will make use of the following main assumption.

{\em Assumption 1. The linear operator $L$ is the generator of a strongly continuous semigroup  $\ee^{tL}$ in $X$}.

This assumption implies that there exist constants $C$ and $\omega$ such that
\begin{equation} \label{eq3.2}
\|\ee^{tL}\|_{X\leftarrow X}\leq C\ee^{\omega t}, \quad t\geq 0
\end{equation}
holds uniformly in a neighborhood of the exact solution, leading to the boundedness of coefficients $a_{ij}(hL)$ and $b_{i}(hL)$.


In the subsequent analysis we will investigate questions on the consistency and stability of the proposed IMEXP methods and derive specific schemes.

\section{IMEXP Runge--Kutta methods }\label{sc4}
Through out this section the nonlinearity $N(u)$ is supposed to be a nonstiff with a moderate Lipschitz constant. We thus can make use of the following additional regularity assumption (for instance, see \cite{LO14b}) in order to study the local error of the proposed ansatz \eqref{eq2.5}.

{\em Assumption 2. We suppose that \eqref{eq1.1} possesses a sufficiently smooth solution $u: [0, T]\rightarrow X$, with derivatives in $X$ and that  $N: X \rightarrow X$ is sufficiently often Fr\'echet differentiable in a strip along the exact solution. All occurring derivatives are assumed to be uniformly bounded.} 

Assumption 2 implies that $N$ is locally Lipschitz in a strip along the exact solution. Typical examples are semilinear reaction-diffusion-advection equations, see \cite{H81}.

 \subsection{Local error analysis}
 Let $\tilde{u}_n$ denote the exact solution of \eqref{eq1.1} at time $t_n$, i.e., $\tilde{u}_n=u(t_n)$. Let 
\begin{equation} \label{eq3.1}
\overbar{e}_{n+1}=\overbar{u}_{n+1}- \tilde{u}_{n+1}
\end{equation}
denote the local error of \eqref{eq2.5} at $t_{n+1}$.
To analyze $\overbar{e}_{n+1}$, we consider one step integration starting with the initial value $\tilde{u}_n$ being the exact solution.

\begin{subequations} \label{eq3.3}
\begin{align}
\overbar{U}_{ni}&=\tilde{u}_n + c_i h R_{1i}(c_i h L)F(\tilde{u}_n) + h \sum_{j=2}^{i-1}a_{ij}(h L)  \overbar{D}_{nj}, \label{eq3.3a}\\
\overbar{u}_{n+1}&= \tilde{u}_n + h R_{1} (h L)F(\tilde{u}_n) + h \sum_{i=2}^{s}b_{i}(h L)  \overbar{D}_{ni} \label{eq3.3b}
\end{align}

with
\begin{equation} \label{eq3.3c}
\overbar{D}_{ni}= N( \overbar{U}_{ni})- N(\tilde{u}_n ).
\end{equation}
\end{subequations}
Taking a closer look at \eqref{eq3.3}, one can consider it as a perturbation scheme of the exponential Runge--Kutta scheme 
\begin{subequations} \label{eq3.4}
\begin{align}
\widecheck{U}_{ni}&=\tilde{u}_n + c_i h\varphi_1(c_i h L)F(\tilde{u}_n) + h \sum_{j=2}^{i-1}a_{ij}(h L)  \widecheck{D}_{nj}, \label{eq3.4a}\\
\check{u}_{n+1}&= \tilde{u}_n + h \varphi_1 (h L)F(\tilde{u}_n) + h \sum_{i=2}^{s}b_{i}(h L)  \widecheck{D}_{ni} \label{eq3.4b}
\end{align}
with
\begin{equation} \label{eq3.4c}
\widecheck{D}_{ni}= N( \widecheck{U}_{ni})- N(\tilde{u}_n ).
\end{equation}
\end{subequations}
This suggests us to employ the result on local errors of exponential Runge--Kutta methods (see \cite{LO12b, LO13}) for studying the local errors $\overbar{e}_{n+1}$ of scheme \eqref{eq2.5}. First, we have the following observation.

Denoting $\widecheck{N}'_{ni}=N'( \widecheck{U}_{ni} )$, $ \widecheck{E}_{ni}= \overbar{U}_{ni}-\widecheck{U}_{ni}$  and using the Taylor series expansion of $N(u)$, we get
\begin{equation} \label{eq3.5}
\overbar{D}_{ni}- \widecheck{D}_{ni}=N(\overbar{U}_{ni})-N( \widecheck{U}_{ni} )=\widecheck{N}'_{ni} \widecheck{E}_{ni} + \widecheck{R}_{ni} 
\end{equation}
with remainder
\begin{equation} \label{eq3.6}
\widecheck{R}_{ni}=\int_{0}^{1} (1-\theta ) N''(\widecheck{U}_{ni} + \theta \widecheck{E}_{ni})(\widecheck{E}_{ni}, \widecheck{E}_{ni})
\text{d}\theta. 
\end{equation}
Employing Assumption~2 shows that
\begin{equation} \label{eq3.7}
\| \widecheck{R}_{ni}\|\leq C\|\widecheck{E}_{ni}\|^2,
\end{equation}
as long as $\overbar{E}_{ni}$ remain in a sufficiently small neighborhood of 0. \\
By subtracting \eqref{eq3.4a} from \eqref{eq3.3a} and denoting  $\widecheck{K}_{ni}=\overbar{D}_{ni}- \widecheck{D}_{ni}$, we get
\begin{equation} \label{eq3.8}
\widecheck{E}_{ni}=c_i h  (R_{1i} (c_i h L)-\varphi_1 (c_i h L))F(\tilde{u}_n)+h\sum_{j=2}^{i-1}a_{ij}(h L) \widecheck{K}_{nj}.
\end{equation}
Inserting \eqref{eq3.8} into \eqref{eq3.5} gives
\begin{equation} \label{eq3.9}
\widecheck{K}_{ni}=c_i h \widecheck{N}'_{ni} (R_{1i} (c_i h L)-\varphi_1 (c_i h L))F(\tilde{u}_n)+h\sum_{j=2}^{i-1}a_{ij}(h L) \widecheck{K}_{nj}+\widecheck{R}_{ni}.
\end{equation}
With this preparation at hand, we are ready to state the following result concerning the stiff order conditions for IMEXP Runge--Kutta methods
\begin{theorem}\label{th3.1}
Under Assumptions 1 and 2, an implicit-explicit exponential Runge--Kutta method has order of consistency $p+1$ if its coefficients $a_{ij}(hL), b_i (hL)$ satisfy the stiff order conditions of the exponential Runge--Kutta methods up to some order $p$ and if coefficients $R_1(h L)$ and $R_{1i}(c_i h L)$ stated in \eqref{eqRni} are chosen in such L way that
\begin{subequations}  \label{eq3.10}
\begin{align}
(R_{1i} (c_i h L)-\varphi_1 (c_i h L))F(\tilde{u}_n)&=\mathcal{O}(h^{p-1}), \label{eq3.10a} \\
 (R_1(h L)-\varphi_1 (h L))F(\tilde{u}_n)&=\mathcal{O}(h^{p})\label{eq3.10b}.
 \end{align}
\end{subequations}
\end{theorem}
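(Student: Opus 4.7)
My strategy is to exploit the perturbation structure that the authors have already set up in (3.3)--(3.9): the IMEXP scheme, applied to the exact solution, differs from the underlying exponential Runge--Kutta scheme only through the replacements $\varphi_1(c_i hL)\mapsto R_{1i}(c_i hL)$ and $\varphi_1(hL)\mapsto R_1(hL)$ in the ``$F(\tilde u_n)$'' terms. Therefore I write
\begin{equation*}
\overbar e_{n+1} \;=\; \bigl(\overbar u_{n+1}-\check u_{n+1}\bigr) \;+\; \bigl(\check u_{n+1}-\tilde u_{n+1}\bigr),
\end{equation*}
and estimate the two pieces separately. For the second piece I invoke the existing theory of stiff order conditions for exponential Runge--Kutta methods (e.g.\ \cite{LO12b, LO13}): since the same coefficients $a_{ij}(hL), b_i(hL)$ are hypothesized to satisfy those conditions up to order $p$, the exponential RK local error satisfies $\check u_{n+1}-\tilde u_{n+1}=\mathcal{O}(h^{p+1})$ uniformly in a neighborhood of the exact solution, using Assumptions 1 and 2.

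The real work is to show $\overbar u_{n+1}-\check u_{n+1}=\mathcal{O}(h^{p+1})$. First I would subtract (3.4b) from (3.3b) to get
\begin{equation*}
\overbar u_{n+1}-\check u_{n+1} \;=\; h\bigl(R_1(hL)-\varphi_1(hL)\bigr)F(\tilde u_n) \;+\; h\sum_{i=2}^{s} b_i(hL)\,\widecheck K_{ni},
\end{equation*}
and then bound each summand. The first summand is $\mathcal{O}(h^{p+1})$ directly by hypothesis (3.10b) combined with the semigroup bound (3.2), which gives the uniform boundedness of $b_i(hL)$ and of the multiplicative factor in front. For the second summand, I need $\widecheck K_{ni}=\mathcal{O}(h^{p})$, which I get by a small induction on $i$ using (3.9): assuming $\widecheck K_{nj}=\mathcal{O}(h^{p})$ for $j<i$, hypothesis (3.10a) makes the first term of (3.9) $\mathcal{O}(h^{p})$ (after using the boundedness of $\widecheck N'_{ni}$ from Assumption 2, which in turn follows from the fact that $\widecheck U_{ni}$ lies in the strip along the exact solution where $N$ is smooth), the sum term is $\mathcal{O}(h\cdot h^{p})$ by the inductive hypothesis and boundedness of $a_{ij}(hL)$, and the remainder term satisfies $\|\widecheck R_{ni}\|\le C\|\widecheck E_{ni}\|^2=\mathcal{O}(h^{2p})$ by (3.7) together with (3.8). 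For $i=2$ the sum is empty, so the base case is immediate. Putting this back into the expression above yields $\overbar u_{n+1}-\check u_{n+1}=\mathcal{O}(h^{p+1})$, and hence $\overbar e_{n+1}=\mathcal{O}(h^{p+1})$, which is consistency of order $p+1$.

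\textbf{Main obstacle.} Nothing in the argument is combinatorially heavy; the delicate point is the uniform bookkeeping of constants in $h$. Specifically, one must ensure that (i) the boundedness of $a_{ij}(hL)$, $b_i(hL)$ and of $R_{1i}(c_i hL)$, $R_1(hL)$ is uniform in $h\in(0,h_0]$ and in $n$ along the numerical trajectory (this is where Assumption 1 and inequality (3.2) enter, and one implicitly needs the rational functions to be $A(\theta)$-stable or at least uniformly bounded on the spectrum of $hL$), and (ii) $\widecheck U_{ni}$ stays in the strip around the exact solution so that $\widecheck N'_{ni}$ and higher derivatives of $N$ are bounded. Both are standard in exponential integrator theory but must be invoked explicitly; the induction for $\widecheck K_{ni}$ then closes cleanly, and the split local-error estimate gives the theorem.
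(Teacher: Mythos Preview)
Your proposal is correct and follows essentially the same route as the paper: the authors split $\overbar e_{n+1}=(\overbar u_{n+1}-\check u_{n+1})+(\check u_{n+1}-\tilde u_{n+1})$, invoke \cite{LO13} for the second term, subtract (3.4b) from (3.3b) for the first, and use the recursion (3.9) together with (3.10) to conclude $\widecheck K_{ni}=\mathcal{O}(h^{p})$ and $\widecheck R_{ni}=\mathcal{O}(h^{2p})$. The only cosmetic difference is that the paper substitutes (3.9) once into the difference $\overbar u_{n+1}-\check u_{n+1}$ to display the expanded form (3.13) before asserting the $\widecheck K_{ni}$ bound, whereas you keep the compact form $h\sum b_i(hL)\widecheck K_{ni}$ and spell out the induction on $i$ explicitly; both arrive at $\overbar u_{n+1}-\check u_{n+1}=\mathcal{O}(h^{p+1})$ by the same mechanism.
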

\begin{proof}
First, we note that the difference $\check{u}_{n+1}-\tilde{u}_{n+1}$ is the local errors of the exponential Runge--Kutta methods. 
It is proved in \cite{LO13} that if coefficients $a_{ij}(hL), b_i (hL)$ satisfy the stiff order conditions of the exponential Runge--Kutta methods of order $p$ then 
 \begin{equation}  \label{eq3.11}
\check{u}_{n+1}-\tilde{u}_{n+1}=\mathcal{O}(h^{p+1}).
\end{equation}
We then express the local errors $\overbar{e}_{n+1}$ given by \eqref{eq3.1} (applied to scheme  \eqref{eq2.5}) as
 \begin{equation}  \label{eq3.12}
\overbar{e}_{n+1}=(\overbar{u}_{n+1}-\check{u}_{n+1})+ (\check{u}_{n+1}-\tilde{u}_{n+1})=\overbar{u}_{n+1}-\check{u}_{n+1}+ \mathcal{O}(h^{p+1}).
\end{equation}
Subtracting \eqref{eq3.4b} from \eqref{eq3.3b} and using \eqref{eq3.5}, \eqref{eq3.9} gives us
 \begin{equation}  \label{eq3.13}
 \begin{aligned}
\overbar{u}_{n+1}-\check{u}_{n+1}&=h\big(R_1(h L)-\varphi_1 (h L)\big)F(\tilde{u}_n) \\
&+c_i h^2 \sum_{i=2}^{s}b_{i}(h L) \widecheck{N}'_{ni} \big(R_{1i} (c_i h L)-\varphi_1 (c_i h L)\big)F(\tilde{u}_n)\\
&+ h^2 \sum_{i=2}^{s}b_{i}(h L) \big(\sum_{j=2}^{i-1}a_{ij}(h L) \widecheck{K}_{nj}+\widecheck{R}_{ni} \big).
\end{aligned}
\end{equation}
Under assumptions in \eqref{eq3.10} and using \eqref{eq3.6}--\eqref{eq3.9}, it is straightforward to derive
$ \widecheck{K}_{ni}=\mathcal{O}(h^{p}), \  \widecheck{R}_{ni}=\mathcal{O}(h^{2p}) $ and thus we deduce from \eqref{eq3.13} that
  \begin{equation}  \label{eq3.14}
\overbar{u}_{n+1}-\check{u}_{n+1}=\mathcal{O}(h^{p+1}).
\end{equation}
From \eqref{eq3.12} and \eqref{eq3.14}, it shows at once $\overbar{e}_{n+1}=\mathcal{O}(h^{p+1}).$

\end{proof}
\subsection{Stability and convergence results}
Let $ e_{n+1} = u_{n+1} - u(t_{n+1})=u_{n+1} - \tilde{u}_{n+1}$ denote the global error of scheme \eqref{eq2.5} at time $t_{n+1}$, and let $\overbar{E}_{ni}=U_{ni}-\overbar{U}_{ni}$. 
It is easy to see that
\begin{equation} \label{eq3.15}
e_{n+1}=u_{n+1}-\bar{u}_{n+1}+\bar{u}_{n+1}- \tilde{u}_{n+1}=u_{n+1}-\bar{u}_{n+1}+\overbar{e}_{n+1}.
\end{equation}
Subtracting \eqref{eq3.3b} from \eqref{eq2.5b} and using $F(u)=Lu+N(u)$ shows that
\begin{equation} \label{eq3.16}
u_{n+1}-\bar{u}_{n+1}=\big(I+R_1(hL)hL\big)e_n + h\big( N(u_n)-N(\tilde{u}_{n})+ \sum_{i=2}^{s}b_{i}(h L) \overbar{K}_{ni} \big)
\end{equation}
with 
\begin{equation} \label{eq3.17}
\overbar{K}_{ni}=D_{ni}-\overbar{D}_{ni}=\big( N(U_{ni})-N(\overbar{U}_{ni})\big)-(N(u_n)-N(\tilde{u}_{n})).
\end{equation}
Inserting \eqref{eq3.16} into \eqref{eq3.15} gives 
\begin{equation} \label{eq3.18}
e_{n+1}=\big(I+R_1(hL)hL\big)e_n+h S_n+ \overbar{e}_{n+1},
\end{equation}
where 
\begin{equation} \label{eq3.19}
S_n= R_1(hL)\big( N(u_n)-N(\tilde{u}_{n})\big)+ \sum_{i=2}^{s}b_{i}(h L) \overbar{K}_{ni}.
\end{equation}
Next, we consider $S_n$. Let $\tilde{N}'_n =N'(\tilde{u}_n)$ and $\overbar{N}'_{ni}=N'( \overbar{U}_{ni} )$.
Again, we use the Taylor series expansion of $N(u)$ to get
\begin{subequations} \label{eq3.20}
\begin{align}
N(u_n)-N(\tilde{u}_n)&=\tilde{N}'_n e_n +\tilde{r}_n,  \label{eq3.20a} \\
N(U_{ni})-N( \overbar{U}_{ni} )&=\overbar{N}'_{ni} \overbar{E}_{ni} + \overbar{R}_{ni} \label{eq3.20b}
\end{align}
\end{subequations}
with remainders $\tilde{r}_n$ and $\overbar{R}_{ni}$
\begin{subequations} \label{eq3.21}
\begin{align}
\tilde{r}_n &=\int_{0}^{1} (1-\theta ) N''(\tilde{u}_n + \theta e_n)(e_n, e_n) \text{d}\theta, \label{eq3.21a}\\
\overbar{R}_{ni} &=\int_{0}^{1} (1-\theta ) N''(\overbar{U}_{ni} + \theta \overbar{E}_{ni})(\overbar{E}_{ni},\overbar{E}_{ni})
\text{d}\theta. \label{eq3.21b}
\end{align}
\end{subequations}
Due to Assumption~2 we have
\begin{equation} \label{eq3.22}
\|\tilde{r}_n\|\leq C\|e_n\|^2, \q  \|\overbar{R}_{ni}\|\leq C\|\overbar{E}_{ni}\|^2,
\end{equation}
as long as $e_n$ and $\overbar{E}_{ni}$ remain in a sufficiently small neighborhood of 0.\\
Inserting \eqref{eq3.17} into \eqref{eq3.19} and using \eqref{eq3.20}, we obtain
\begin{equation} \label{eq3.23}
S_n= \big(R_1(hL)- \sum_{i=2}^{s}b_{i}(h L)\Big)(\tilde{N}'_n e_n +\tilde{r}_n)+ \sum_{i=2}^{s}b_{i}(h L) (\overbar{N}'_{ni} \overbar{E}_{ni} + \overbar{R}_{ni}).
\end{equation}
\begin{lemma} \label{lm3.1}
If Assumptions 1 and 2 hold and functions $R_1(hL)$ and $R_{1i}(c_i h L)$ ($  i=2,\ldots,s$) are chosen in such L way that the following bounds 
\begin{eqnarray}  \label{eq3.24}
&\| R_1(hL)\|_{X\leftarrow X} \leq C, \quad \| R_{1i}(c_i h L)\|_{X\leftarrow X} \leq C, \nonumber \\
&\  \|I+ R_{1i}(c_i h L)c_i h L\|_{X\leftarrow X} \leq C,
\end{eqnarray}
hold uniformly, then there exist bounded operators $\mathcal{B}_{n} (e_n)$ on $X$ such that
\begin{equation}  \label{eq3.25}
S_{n}= \mathcal{B}_{n} (e_n)e_n .
\end{equation}
\begin{proof}
First we derive a recursion for $\overbar{E}_{ni}$ by subtracting \eqref{eq3.3a} from \eqref{eq2.5a} and employing \eqref{eq3.20}
\begin{eqnarray} \label{eq3.26}
\overbar{E}_{ni}&=&\big(I+R_{1i}(c_i hL)c_i hL\big)e_n + h \big(c_i R_{1i}(c_i hL)-\sum_{j=2}^{i-1}a_{ij}(h L)\Big) (\tilde{N}'_n e_n +\tilde{r}_n) \nonumber \\
&+&h\sum_{j=2}^{i-1}a_{ij}(h L)(\overbar{N}'_{nj} \overbar{E}_{nj} + \overbar{R}_{nj}). 
\end{eqnarray}
Solving recursion \eqref{eq3.26} by induction on the index $j$ with the help of \eqref{eq3.21} and inserting the obtained result into \eqref{eq3.23} yields \eqref{eq3.25}. The boundedness of $\mathcal{B}_{n} (e_n)$ follows from the assumptions of Lemma~\ref{lm3.1} and the bounds in \eqref{eq3.22}.
\end{proof}
\end{lemma}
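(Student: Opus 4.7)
The plan is to start from the recursion for $\overbar{E}_{ni}$ that the authors set up in \eqref{eq3.26}, then unfold it by induction on $i$ to express each $\overbar{E}_{ni}$ in the factored form $\mathcal{M}_{ni}(e_n)\,e_n$ with a bounded operator $\mathcal{M}_{ni}(e_n)$, and finally substitute into \eqref{eq3.23} to obtain the desired representation $S_n = \mathcal{B}_n(e_n)\,e_n$.

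First I would handle the Taylor remainders. The critical observation is that both $\tilde{r}_n$ and $\overbar{R}_{ni}$ are quadratic in their arguments, so one factor can always be pulled out. Concretely, from \eqref{eq3.21a} one can write
\begin{equation*}
\tilde{r}_n = \Bigl(\int_{0}^{1} (1-\theta)\, N''(\tilde{u}_n + \theta e_n)(e_n,\cdot)\,\dd\theta\Bigr) e_n =: \widetilde{\mathcal{R}}_n(e_n)\, e_n,
\end{equation*}
and by \eqref{eq3.22} the operator $\widetilde{\mathcal{R}}_n(e_n)$ is bounded (with norm $\le C\|e_n\|$) as long as $e_n$ lies in a small neighborhood of $0$; the analogous factorization gives $\overbar{R}_{ni} = \overbar{\mathcal{R}}_{ni}(\overbar{E}_{ni})\,\overbar{E}_{ni}$.

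Next I would run the induction. For $i=2$, the sum in \eqref{eq3.26} is empty, so
\begin{equation*}
\overbar{E}_{n2} = \bigl(I+R_{12}(c_2 hL)c_2 hL\bigr)e_n + h\bigl(c_2 R_{12}(c_2hL)-a_{22}(hL)\bigr)\bigl(\tilde{N}'_n + \widetilde{\mathcal{R}}_n(e_n)\bigr)e_n,
\end{equation*}
which is manifestly of the form $\mathcal{M}_{n2}(e_n)\,e_n$ with $\mathcal{M}_{n2}(e_n)$ bounded thanks to \eqref{eq3.24}, Assumption~1 (boundedness of $a_{ij}(hL)$), and Assumption~2 (boundedness of $\tilde{N}'_n$). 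For the inductive step, assuming $\overbar{E}_{nj}=\mathcal{M}_{nj}(e_n)e_n$ for $j<i$, the contribution $a_{ij}(hL)(\overbar{N}'_{nj}\overbar{E}_{nj}+\overbar{R}_{nj})$ in \eqref{eq3.26} becomes $a_{ij}(hL)\bigl(\overbar{N}'_{nj}+\overbar{\mathcal{R}}_{nj}(\overbar{E}_{nj})\bigr)\mathcal{M}_{nj}(e_n)e_n$, and collecting everything yields $\overbar{E}_{ni}=\mathcal{M}_{ni}(e_n)e_n$ with $\mathcal{M}_{ni}(e_n)$ again bounded.

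Finally, I would insert these factored expressions together with \eqref{eq3.20a} into \eqref{eq3.23}, obtaining
\begin{equation*}
S_n = \Bigl[\bigl(R_1(hL)-\sum_{i=2}^{s} b_i(hL)\bigr)\bigl(\tilde{N}'_n+\widetilde{\mathcal{R}}_n(e_n)\bigr) + \sum_{i=2}^{s} b_i(hL)\bigl(\overbar{N}'_{ni}+\overbar{\mathcal{R}}_{ni}(\overbar{E}_{ni})\bigr)\mathcal{M}_{ni}(e_n)\Bigr] e_n,
\end{equation*}
which identifies the bracketed expression as $\mathcal{B}_n(e_n)$. Its boundedness follows from Assumption~1 (which ensures $\|b_i(hL)\|\le C$ uniformly in $h$), \eqref{eq3.24}, Assumption~2 (uniform boundedness of $N'$ and $N''$), and the bounds \eqref{eq3.22}. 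The main obstacle I anticipate is not the algebra itself but keeping track of the dependencies: $\mathcal{M}_{ni}$ depends on $e_n$ through nested Taylor remainders, and one needs the smallness assumption on $e_n$ and on the $\overbar{E}_{ni}$ to ensure that the intermediate operators $\widetilde{\mathcal{R}}_n$ and $\overbar{\mathcal{R}}_{ni}$ remain bounded, so that the induction closes with a uniform bound on $\mathcal{B}_n(e_n)$.
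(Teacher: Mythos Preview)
Your proposal is correct and follows essentially the same approach as the paper: both derive the recursion \eqref{eq3.26} for $\overbar{E}_{ni}$, unfold it by induction to factor out $e_n$, and then substitute into \eqref{eq3.23} to identify $\mathcal{B}_n(e_n)$; you simply spell out in detail what the paper compresses into a single sentence. One small slip: in your base case $i=2$ both sums $\sum_{j=2}^{i-1}$ in \eqref{eq3.26} are empty, so the term $a_{22}(hL)$ should not appear --- but this does not affect the argument.
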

We are now ready to provide sufficient conditions for convergence of IMEXP Runge--Kutta methods \eqref{eq2.5}. 
\begin{theorem}\label{th3.2}
Let the initial value problem \eqref{eq1.1} satisfy Assumptions 1--2. Consider for its numerical solution an IMEXP Runge--Kutta method \eqref{eq2.5} that fulfills the order conditions of an exponential Runge--Kutta method up to some order $p$. If coefficients $R_1(h L)$ and $R_{1i}(c_i h L)$ are chosen such that the order conditions in \eqref{eq3.10} are fulfilled and the stability bounds \eqref{eq3.24} and 
\begin{equation}\label{eq3.27}
\|\big(I+R_1 (hL)hL\Big)^{n-j}\|_{X\leftarrow X} \leq C_S, \ j=0,\ldots, n-1
\end{equation}
hold uniformly, then the numerical solution $u_n$ satisfies the error bound
\begin{equation}\label{eq3.28}
\| u_n -u(t_n)\|\leq C h^p
\end{equation}
uniformly on \ $t_0 \leq  t_n =t_0+nh \leq  T$ with a constant $C$ that depends on $T-t_0$, but is independent of $n$ and $h$.
\end{theorem}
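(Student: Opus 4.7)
The strategy is to combine the local error estimate from Theorem~\ref{th3.1}, the Lipschitz-type bound on the perturbation $S_n$ from Lemma~\ref{lm3.1}, and the discrete stability hypothesis \eqref{eq3.27} to close a discrete Gronwall argument on the global error recursion \eqref{eq3.18}.

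First, I would start from the error equation \eqref{eq3.18},
\begin{equation*}
e_{n+1}=\bigl(I+R_1(hL)hL\bigr)e_n+h S_n+\overbar{e}_{n+1},
\end{equation*}
and iterate it, using the exact starting value $e_0=0$, to obtain the discrete variation-of-constants formula
\begin{equation*}
e_n=\sum_{j=0}^{n-1}\bigl(I+R_1(hL)hL\bigr)^{\,n-1-j}\bigl(h S_j+\overbar{e}_{j+1}\bigr).
\end{equation*}
Taking norms and applying the stability bound \eqref{eq3.27} termwise yields
\begin{equation*}
\|e_n\|\le C_S\sum_{j=0}^{n-1}\bigl(h\,\|S_j\|+\|\overbar{e}_{j+1}\|\bigr).
\end{equation*}

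Next, I would insert the two available estimates: the local error bound $\|\overbar{e}_{j+1}\|\le C h^{p+1}$ from Theorem~\ref{th3.1}, and the Lipschitz-type bound $\|S_j\|\le C\|e_j\|$ coming from Lemma~\ref{lm3.1} (its hypotheses \eqref{eq3.24} are guaranteed by assumption). Since $nh\le T-t_0$, the accumulated local error contribution is bounded by $C\,nh\cdot h^{p}\le C h^{p}$, so one arrives at
\begin{equation*}
\|e_n\|\le C h^{p}+C h\sum_{j=0}^{n-1}\|e_j\|.
\end{equation*}
A standard discrete Gronwall inequality then produces $\|e_n\|\le C h^{p}$ with a constant $C$ depending on $T-t_0$ and the uniform constants of Assumptions 1--2 and \eqref{eq3.27}, but independent of $n$ and $h$.

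The main obstacle is ensuring that the iterates $u_n$ and the internal stages $U_{ni}$ remain in the neighborhood of the exact solution on which Assumption~2 and the quadratic remainder bounds \eqref{eq3.22} are valid, so that $\mathcal{B}_n(e_n)$ from Lemma~\ref{lm3.1} is uniformly bounded. I would address this by a standard bootstrap: assume inductively that $\|e_j\|\le h_0^{p/2}$ (say) for all $j\le n$ with $h\le h_0$ sufficiently small; then the stage recursion \eqref{eq3.26}, together with \eqref{eq3.24}, gives $\|\overbar{E}_{nj}\|=\mathcal{O}(\|e_n\|)$, keeping the stages in the required neighborhood, so the Gronwall step applies and delivers $\|e_{n+1}\|\le C h^{p}\le h_0^{p/2}$ for $h$ small enough, closing the induction and establishing the bound \eqref{eq3.28} uniformly in $n$.
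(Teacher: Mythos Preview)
Your proof is correct and follows essentially the same route as the paper: iterate the error recursion \eqref{eq3.18} (equivalently \eqref{eq3.29}) into a discrete variation-of-constants sum, apply the stability bound \eqref{eq3.27}, insert the local error estimate $\overbar{e}_{j+1}=\mathcal{O}(h^{p+1})$ from Theorem~\ref{th3.1} and the bound $\|S_j\|\le C\|e_j\|$ from Lemma~\ref{lm3.1}, and close with a discrete Gronwall lemma. Your explicit bootstrap argument to keep the numerical iterates in the neighborhood where Assumption~2 and the quadratic remainder bounds \eqref{eq3.22} apply is a point the paper leaves implicit, so in that respect your write-up is slightly more careful.
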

\begin{proof}
In view of \eqref{eq3.18} and \eqref{eq3.25}, we get
\begin{equation}  \label{eq3.29}
e_{n+1}=\big(I+R_1(hL)hL\big)e_n+h \mathcal{B}_{n} (e_n)e_n + \overbar{e}_{n+1}.
\end{equation}
Solving recursion \eqref{eq3.29} and using $e_0=0$ finally yields
\begin{equation} \label{eq3.30}
e_{n}=h\sum_{j=0}^{n-1} \big(I+R_1(hL)hL\big)^{n-j} \big( \mathcal{B}_j (e_j)e_j + \frac{1}{h}\overbar{e}_{j+1}\Big).
\end{equation}
 Under the assumptions of Theorem~\ref{th3.2}, we have $\|\mathcal{B}_j (e_j)e_j \|\leq C\|e_j\|$ and $\overbar{e}_{j+1}=\mathcal{O}(h^{p+1})$.
 The proof is completed with the help of the stability bound \eqref{eq3.27} and an application of a discrete Gronwall lemma (see \cite{Em05}) to \eqref{eq3.30}.
\end{proof}
\subsection{On the choice of rational functions $R_1(Z)$ and $R_{1i}(Z)$}\label{sc4.3}
The result of Theorem~\ref{th3.2} shows the sufficient conditions for which the coefficients $R_1(h L)$ and $R_{1i}(c_i h L)$ need to fulfill for an implicit-explicit exponential Runge--Kutta method of order $p$. In particular, the order conditions \eqref{eq3.10} and the stability bounds \eqref{eq3.24}, \eqref{eq3.27} are the constraints for choosing such coefficients. 

First, we focus on conditions \eqref{eq3.10}. Our idea is to use Pad\'e approximation (for instance, see \cite[Chap. IV]{HW96}) for finding a rational function $R(z)$ which Lpproximates to $\varphi_1 (z)$. Given the fact that $\varphi_1 (z)=\dfrac{e^z - 1}{z}$, we employ  Pad\'e approximations to the exponential function to derive the following results 
 \begin{subequations} \label{eq3.31}
\begin{align}
\varphi_1 (z) &=(1-z)^{-1}+\mathcal{O}(z), \label{eq3.31a}\\
\varphi_1 (z) &=(1-\frac{1}{2}z)^{-1}+\mathcal{O}(z^2). \label{eq3.31b}
\end{align}
\end{subequations}
\setlength{\extrarowheight}{4 pt}
\begin{table}[h] 
\caption{Functions $R_1(Z)$ and $R_{1i}(Z)$ and the corresponding exponential Runge-Kutta stiff order conditions for the methods of orders 1 and 2.}
\begin{center}\label{tb1}
\begin{tabular}{ |c|c|c| }
\hline
Order $p$ & $R_1(Z)$ and $R_{1i}(Z)$ & \vtop{\hbox{\strut \q \q Stiff order conditions for}  \hbox{\strut exponential Runge--Kutta methods}}   \\
\hline
1&$R_1(Z)=(1-Z)^{-1}$ &\\
 \hline
2& \vtop{\hbox{\strut $R_{1i}(Z)=(1-Z)^{-1}$ } \hbox{\strut $R_1(Z)= (1-\frac{1}{2}Z)^{-1}$ }}& $\sum_{i=2}^{s}b_{i}(Z)c_i = \varphi_2(Z)$\\
\hline
\end{tabular}
\end{center}
\end{table}
This suggests at once the searching functions $R_1(Z)$ and $R_{1i}(Z)$ for methods of possible orders 1 and 2, see Table~\ref{tb1}. Indeed, we can prove the following result for the case where $X=\mathbb{C}^n$ with the standard inner product denoted by $(\cdot,\cdot )$.
\begin{lemma} \label{lm4.2}
Let $X=\mathbb{C}^n$ and suppose that the matrix $L$ satisfies 
\begin{equation}  \label{eq3.32}
\real(u, Lu)\leq 0 \ \text{for all} \ u \in X.
\end{equation}
Under Assumptions 1 and 2, we have
\begin{equation}  \label{eq3.33}
\|\big((I-hL)^{-1}-\varphi_1 (h L)\big)F(\tilde{u}_n )\|\leq Ch.
\end{equation}
Further assume that $L\dfrac{\dd }{\dd t}N(u(t))\big\lvert_{t=t_n}$ is uniformly bounded on $X$. Then, we have 
\begin{equation}  \label{eq3.34}
\|\big((I-\frac{1}{2}hL)^{-1}-\varphi_1 (h L)\big)F(\tilde{u}_n) \|\leq Ch^2.
\end{equation}
The constants in \eqref{eq3.33} and \eqref{eq3.34} can be chosen uniformly bounded on $[t_0, T]$, and in particular, are independent of $n$ and $h$ (i.e. independent of $\|L\|$).
 \end{lemma}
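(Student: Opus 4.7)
My plan is to reduce both inequalities to estimating $\ee^{hL} - \varphi_1(hL)$ and $\tfrac12(I+\ee^{hL}) - \varphi_1(hL)$ acting on $F(\tilde u_n) = \tilde u'_n$, and to use that dissipativity \eqref{eq3.32} makes every other operator appearing in the proof non-expansive. A first step is to record the contractivity bounds: for $\lambda>0$ the energy estimate $\|u - \lambda Lu\|^2 \ge \|u\|^2$ yields $\|(I-\lambda L)^{-1}\|\le 1$, and the Lumer--Phillips theorem combined with \eqref{eq3.32} gives $\|\ee^{sL}\|\le 1$ and hence $\|\varphi_1(hL)\|\le 1$. Second, the scalar identity $z\varphi_1(z)=e^z-1$ yields the factorizations
\begin{align*}
(I-hL)^{-1} - \varphi_1(hL) &= (I-hL)^{-1}\bigl(\ee^{hL} - \varphi_1(hL)\bigr),\\
(I-\tfrac12 hL)^{-1} - \varphi_1(hL) &= (I-\tfrac12 hL)^{-1}\Bigl(\tfrac12(I+\ee^{hL}) - \varphi_1(hL)\Bigr),
\end{align*}
so the contractivity of the resolvents reduces \eqref{eq3.33} and \eqref{eq3.34} to estimating the right-hand factors applied to $F(\tilde u_n)$.

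For \eqref{eq3.33} I would use $\varphi_1(hL) = \tfrac{1}{h}\int_0^h \ee^{sL}\,ds$ to write
\begin{equation*}
\bigl(\ee^{hL}-\varphi_1(hL)\bigr)F(\tilde u_n) = \tfrac{1}{h}\int_0^h \int_0^{h-s} \ee^{(s+\tau)L}\,L F(\tilde u_n)\, d\tau\, ds,
\end{equation*}
whose norm is bounded by $\tfrac{h}{2}\|LF(\tilde u_n)\|$. The key point is that $LF(\tilde u_n)$ stays uniformly bounded even though $L$ is stiff: differentiating \eqref{eq1.1} once gives $\tilde u''_n = L\tilde u'_n + N'(\tilde u_n)\tilde u'_n$, so $LF(\tilde u_n) = L\tilde u'_n = \tilde u''_n - N'(\tilde u_n)\tilde u'_n$, which is controlled by Assumption~2. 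This delivers \eqref{eq3.33}.

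For \eqref{eq3.34} I would recognise $\tfrac12(I+\ee^{hL})F(\tilde u_n)$ as the trapezoidal-rule approximation of the integral defining $\varphi_1(hL)F(\tilde u_n)$ and apply the Peano-kernel representation to obtain
\begin{equation*}
\bigl(\tfrac12(I+\ee^{hL}) - \varphi_1(hL)\bigr)F(\tilde u_n) = \tfrac{1}{2h}\int_0^h s(h-s)\,\ee^{sL}\, L^2 F(\tilde u_n)\, ds,
\end{equation*}
whose norm is at most $\tfrac{h^2}{12}\|L^2 F(\tilde u_n)\|$. The main obstacle here is showing that $L^2 F(\tilde u_n)$ stays bounded in spite of the stiffness of $L$; this is precisely where the extra hypothesis enters. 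Differentiating \eqref{eq1.1} a second time yields $L\tilde u''_n = \tilde u'''_n - N''(\tilde u_n)(\tilde u'_n,\tilde u'_n) - N'(\tilde u_n)\tilde u''_n$, so
\begin{equation*}
L^2 F(\tilde u_n) = L\tilde u''_n - L\,N'(\tilde u_n)\tilde u'_n = L\tilde u''_n - L\tfrac{\dd}{\dd t}N(u(t))\Big\lvert_{t=t_n},
\end{equation*}
in which the first summand is bounded by Assumption~2 and the second by the extra hypothesis. Uniformity of the constants in $n$, $h$ and $\|L\|$ is then automatic, since every bound above depends only on dissipativity of $L$ and on time-derivative norms of the exact solution over $[t_0,T]$.
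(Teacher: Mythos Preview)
Your argument is correct, and it follows a genuinely different route from the paper's proof.

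The paper introduces $\tilde v_n=(I-hL)^{-1}F(\tilde u_n)$ and $\tilde w_n=(I-\tfrac12 hL)^{-1}F(\tilde u_n)$, quotes from \cite{LO12b} the expansion
\[
\varphi_1(hL)F(\tilde u_n)=\tilde u'_n+\tfrac{h}{2}\bigl(\tilde u''_n-2\varphi_2(hL)\tfrac{\dd}{\dd t}N(u(t))\big\lvert_{t=t_n}\bigr)+\mathcal{O}(h^2),
\]
and then substitutes the resolvent identities $\tilde u'_n=\tilde v_n-hL\tilde v_n$ and $\tilde u'_n=\tilde w_n-\tfrac12 hL\tilde w_n$, $\tilde u''_n=L\tilde w_n-\tfrac12 hL^2\tilde w_n+\tfrac{\dd}{\dd t}N(u(t))\big\lvert_{t=t_n}$ back into this expansion; for \eqref{eq3.34} it further uses $\varphi_2(hL)=\tfrac12 I+h\varphi_3(hL)L$ to transfer the factor $L$ onto $\tfrac{\dd}{\dd t}N$. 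The regularity is packaged as uniform boundedness of $L\tilde v_n$, $L\tilde w_n$, $L^2\tilde w_n$.

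You instead factor out the contractive resolvent via the exact identities
\[
(I-hL)^{-1}-\varphi_1(hL)=(I-hL)^{-1}\bigl(\ee^{hL}-\varphi_1(hL)\bigr),\qquad
(I-\tfrac12 hL)^{-1}-\varphi_1(hL)=(I-\tfrac12 hL)^{-1}\bigl(\tfrac12(I+\ee^{hL})-\varphi_1(hL)\bigr),
\]
and recognise the remaining factors as the right-endpoint and trapezoidal quadrature errors for $\tfrac{1}{h}\int_0^h \ee^{sL}F(\tilde u_n)\,\dd s$, whose Peano kernels give the bounds $\tfrac{h}{2}\|LF(\tilde u_n)\|$ and $\tfrac{h^2}{12}\|L^2F(\tilde u_n)\|$. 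You then extract the needed boundedness of $LF(\tilde u_n)$ and $L^2F(\tilde u_n)$ directly by differentiating $u'=Lu+N(u)$, which is where the extra hypothesis on $L\tfrac{\dd}{\dd t}N$ naturally appears. Your proof is self-contained (no appeal to \cite{LO12b}) and makes the role of the quadrature interpretation transparent; the paper's version has the advantage that the intermediate quantities $L\tilde w_n$, $L^2\tilde w_n$ reappear verbatim in the later hybrid analysis (e.g.\ in \eqref{eq3.57}), so it dovetails with Section~4 without rederivation.
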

 \begin{proof}
 Let $\tilde{v}_n=(I-hL)^{-1}F(\tilde{u}_n )$ and $\tilde{w}_n=(I-\frac{1}{2}hL)^{-1}F(\tilde{u}_n )$. Under condition \eqref{eq3.32}, it follows from Theorem 11.2 in \cite[Chap. IV]{HW96} that 
\begin{equation}  \label{eq3.35}
\|\big(I-hL)^{-1}\|\leq \sup_{\real z\leq 0}|\tfrac{1}{1-z}|\leq 1, \ 
\|\big(I-\frac{1}{2}hL)^{-1} \|\leq \sup_{\real z\leq 0}|\tfrac{1}{1-z/2}|\leq 1.
\end{equation}
This reflects the fact that the rational functions $\frac{1}{1-z}$ and $\frac{1}{1-z/2}$ are A-stable.
From this it is easy to see that $\tilde{v}_n, \tilde{w}_n$ are uniformly bounded.
Using $\tilde{u}_{n}'=u'(t_n)=F(\tilde{u}_n )$, one gets
  \begin{subequations} \label{eq3.36}
\begin{align}
\tilde{u}_{n}'&=\tilde{v}_n-hL\tilde{v}_n, \label{eq3.36a}\\
\tilde{u}_{n}'&=\tilde{w}_n-\frac{1}{2}hL\tilde{w}_n, \q
\tilde{u}_{n}''=L\tilde{w}_n-\frac{1}{2}hL^2\tilde{w}_n+ \frac{\dd }{\dd t}N(u(t))\big\lvert_{t=t_n}. \label{eq3.36b}
\end{align}
\end{subequations}
We infer from \eqref{eq3.36} 
that $L\tilde{v}_n, L\tilde{w}_n, L^2\tilde{w}_n$ are uniformly bounded under assumption 2.
We now use the following expansion of $\varphi_1 (h L) F(\tilde{u}_n )$ (see \cite{LO12b}) 
\begin{equation}  \label{eq3.37}
\varphi_1 (h L) F(\tilde{u}_n )=\tilde{u}_{n}'+\frac{h}{2}\big(\tilde{u}_{n}''-2 \varphi_2 (h L) \frac{\dd }{\dd t}N(u(t))\big\lvert_{t=t_n}\big)+\mathcal{O}(h^2),
\end{equation}
where the remainder terms behind the Landau notation (mutiplying by $h^2$) are uniformly bounded by the assumptions 1 and 2.
Inserting \eqref{eq3.36a} into \eqref{eq3.37} one obtains 
\begin{equation}  \label{eq3.38}
\varphi_1 (h L) F(\tilde{u}_n )=\tilde{v}_n-h\big(L\tilde{v}_n-\frac{1}{2}\tilde{u}_{n}''+ \varphi_2 (h L) \frac{\dd }{\dd t}N(u(t))\big\lvert_{t=t_n}\big)+\mathcal{O}(h^2).
\end{equation}
Inserting \eqref{eq3.36b} into \eqref{eq3.37} and using the fact that $\varphi_2 (h L)=\frac{1}{2}I+h\varphi_3 (h L)L$, we end up with
\begin{equation}  \label{eq3.39}
\varphi_1 (h L) F(\tilde{u}_n )=\tilde{w}_n-h^2\big(\frac{1}{4}L^2\tilde{w}_n+ \varphi_3 (h L) L\frac{\dd }{\dd t}N(u(t))\big\lvert_{t=t_n}\big)+\mathcal{O}(h^2).
\end{equation}
It is clear that the coefficients of $h$ in \eqref{eq3.38} and $h^2$ in \eqref{eq3.39} are uniformly bounded under the conditions of Lemma~\ref{lm4.2}. Therefore, one derives at once \eqref{eq3.33} and \eqref{eq3.34}.  
 \end{proof}
\emph{Remark.} Condition \eqref{eq3.32} is often fulfilled for the matrix $L$, which is resulted from the spatial discretization of a second-order strongly elliptic differential operator (e.g. the Laplacian or the gradient). The assumption of uniform boundedness of $L\dfrac{\dd }{\dd t}N(u(t))\big\lvert_{t=t_n}$ is valid for many semilinear parabolic PDEs such as reaction-diffusion equations, the Allen-Cahn equation (see, e.g \cite{Zhu2015}) and the Chafee-Infante problem \cite[Chap.~5]{H81}. 

Next, we verify whether the chosen rational functions in Table~\ref{tb1} satisfy the stability bounds \eqref{eq3.24} and \eqref{eq3.27}. Clearly, under condition \eqref{eq3.32} the first two bounds in  \eqref{eq3.24} are fulfilled due to \eqref{eq3.35}. Similarly, one can see that the third bound (with $R_{1i}(c_i h L)=(I-c_i hL)^{-1}$) and the bound \eqref{eq3.27} (with $R_{1}(hL)=(I-hL)^{-1}$ or $R_{1}(hL)=(I-\frac{1}{2}hL)^{-1}$) are also fulfilled because of the following observations
  \begin{subequations} \label{eq3.40}
\begin{align}
 \|I+ (I-c_i hL)^{-1}c_i h L\|&=\|(I-c_i hL)^{-1}\|\leq 1,\\
 \|\big(I+ (I-hL)^{-1}h L\big)^{n-j}\|&\leq \|(I- hL)^{-1}\|^{n-j}\leq 1,\\
  \|\big(I+ (I-\frac{1}{2}hL)^{-1}hL\Big)^{n-j}\|&\leq \|(I- \frac{1}{2}hL)^{-1} (I+\frac{1}{2}hL)\|^{n-j}\leq 1. \label{eq3.40c}
\end{align}
\end{subequations}
The last inequality in \eqref{eq3.40} holds since $\sup_{\real z\leq 0}|\frac{1+z/2}{1-z/2}|\leq 1$ (the rational function $\frac{1+z/2}{1-z/2}$ is A-stable).
\subsection{Derivation of the first- and second-order methods}
Based on the results of Theorem~\ref{th3.2} and Section~\ref{sc4.3}, it is straightforward to derive methods of orders 1 and 2 with the chosen rational functions in  Table~\ref{tb1} (The convergence of these methods follows directly from Theorem~\ref{th3.2}). In particular,  we obtain the following 1-stage IMEXP Runge--Kutta method of order 1 which we will call $\mathtt{ImExpRK1}$:
\begin{equation}  \label{eq3.41}
u_{n+1}= u_n + h (I-hL)^{-1}F(u_n).
\end{equation}
Using $F(u_n)=Lu_n+ N(u_n)$ and the equality $I+ (I-hL)^{-1}h L=(I-hL)^{-1}$, one realizes that the method $\mathtt{ImExpRK1}$ coincides with the implicit-explicit Rung--Kutta method of order 1: $u_{n+1}= u_n + hLu_{n+1}+hN(u_n).$

As $s=2$ we take $c_2=\frac{1}{2}$ and choose from Table~\ref{tb1} coefficients $R_{12}(hL)= R_1(hL)=(I-\frac{1}{2}hL)^{-1}$, $b_2(hL)=2\varphi_2(hL)$. This leads to the following second-order method which we will call $\mathtt{ImExpRK2}$:
\begin{equation}\label{eq3.42} 
\begin{aligned}
U_{n2}&= u_n + \frac{1}{2} h(I-\frac{1}{2} h L)^{-1}F(u_n), \\
u_{n+1}& = u_n + h (I-\frac{1}{2}h L)^{-1}F(u_n) + 2h\varphi_2(hL)(N(U_{n2})-N(u_n)).
\end{aligned}
\end{equation}
\subsection{A discussion of higher-order methods}
We now discuss whether it is possible to derive IMEXP Runge--Kutta methods of higher orders. Clearly, due to \eqref{eq3.10} such methods require higher-order rational approximations of $\varphi_1 (h L)F(u_n )$, which also need to satisfy the stability bounds \eqref{eq3.24} and \eqref{eq3.27}.
Again, we make use of the Pad\'e approximations to the exponential function to get, for instance, 
 \begin{subequations} \label{eq3.43} 
\begin{align}
\varphi_1 (z) &=(1+\frac{1}{6}z)(1-\frac{1}{3}z)^{-1}+\mathcal{O}(z^3),\\
\varphi_1 (z) &=(1-\frac{1}{2}z+ \frac{1}{12}z^2)^{-1}+\mathcal{O}(z^4).
\end{align}
\end{subequations}
This offers, for example, the choice of $R_1(hL)=(1+\frac{1}{6}hL)(1-\frac{1}{3}hL)^{-1}$ for methods of order 3. However, we can show that such a method requires much stronger regularity assumptions in order to fulfill \eqref{eq3.10}.  Construction of higher order methods will be the subject of future publications.
\section{Hybrid IMEXP}
In this section, we consider the case where the stiffness of problem \eqref{eq1.1} comes from both the linear $L$ and nonlinear $N(u)$ operators. 
Motivated by the results of Section~\ref{sc4}, we propose the following two perturbation schemes of \eqref{eq3.42}
\begin{subequations}\label{eq3.44} 
\begin{align}
U_{n2}&= u_n + \frac{1}{2} h(I-\frac{1}{2} h L)^{-1}F(u_n),\label{eq3.44a} \\
u_{n+1}& = u_n + h (I-\frac{1}{2}h L)^{-1}F(u_n) + 2h\varphi_2(h J_n)(N(U_{n2})-N(u_n)) \label{eq3.44b}
\end{align}
\end{subequations}
and 
\begin{equation}\label{eq3.45} 
\begin{aligned}
U_{n2}&= u_n + \frac{1}{2} h(I-\frac{1}{2} h L)^{-1}F(u_n), \\
u_{n+1}& = u_n + h (I-\frac{1}{2}h L)^{-1}F(u_n) + 2h\varphi_2(h N'_n)(N(U_{n2})-N(u_n)),
\end{aligned}
\end{equation}
which will be called $\mathtt{HImExp2J}$ and  $\mathtt{HImExp2N}$, respectively.
In the following we will show that these two schemes are of order 2 as well. First, note that since we are working with the nonlinearity $N(u)$ which is assumed to have some stiffness, the norms of the Jacobian $N'(u)$ and its higher derivatives could be large so that Assumption~2 is not feasible. Thus, in order to give a convergence result for those two schemes, we will make use of the following much weaker, but reasonable, assumption on the solution and the nonlinearity.

{\em Assumption 3. Suppose that \eqref{eq1.1} possesses a solution that is thrice differentiable with (uniformly) bounded derivatives in $X$, and that  $N: X \rightarrow X$ is twice continuously Fr\'echet differentiable in a strip along the exact solution. } 

Under this assumption one can see that $N(u)$ also satisfies the Lipschitz condition in a strip along the exact solution. Moreover, in the case where $X=\mathbb{C}^n$ we have the following property of the second-order Fr\'echet derivative 
\begin{equation}\label{eq3.46}
N(u+\Delta u) = N(u) +  N'(u)\Delta u + \int_{0}^{1} (1-s)N''(u+s\Delta u)(\Delta u,\Delta u)\dd s.
\end{equation}
In the following, and unless otherwise specified, we will work in $X=\mathbb{C}^n$.

Since $N'_n=N'(u_n)$ is a linear bounded operator on $X$, the Jacobians $J_n=L+N'_n$ and $N'_n$ generate strongly continuous semigroups  $\ee^{tJ_n}$ and $\ee^{tN'_n}$(see \cite[Chap.~3.1]{PAZY83}).  Thus one infers from Assumption~1 that $\ee^{hJ_n}$ and $\ee^{hN'_n}$ are uniformly bounded, and thus so are $\varphi_2(h J_n)$ and $\varphi_2(h N'_n)$.
\subsection{Expansion of the exact solution}
Using Assumptions 1 and 3, we will derive an expansion of the exact solution of \eqref{eq1.1} at time $t_{n+1}$, i.e., $u(t_{n+1})$. Again, let $\tilde{u}_n=u(t_n)$. Expressing $u(t_{n+1})$ by the variation-of-constants formula gives
\begin{equation} \label{eq3.48}
\tilde{u}_{n+1}=u(t_{n+1})=\ee^{h L}\tilde{u}_n+h \int_{0}^{1} \ee^{(1-\theta )h L} N(u(t_n +\theta h))\,\text{d}\theta.
\vspace{-1.5mm}
\end{equation}
Now let $\mathcal{U}_n=u(t_n+\theta h)-\tilde{u}_n$ and let  $\tilde{u}'_n, \tilde{u}''_n, \tilde{u}'''_n$ denote the first, second, and third derivative of the exact solution  $u(t)$ of \eqref{eq1.1} 
evaluated at time $t_n$. 
Under Assumption~3 we can expand $u(t_n+\theta h)$ in a Taylor series at $t_n$ to get 
\begin{equation}\label{eq3.49} 
\mathcal{U}_n= \theta h \tilde{u}'_n +  \theta^2 h^2 \int_{0}^{1} (1-s)u''(t_n+\theta h s) \dd s. 
\end{equation}
Using \eqref{eq3.46} with $ u=\tilde{u}_n, \Delta u= \mathcal{U}_n $, one derives 
\begin{equation}\label{eq3.50} 
N(u(t_n +\theta h))= N(\tilde{u}_n) +  N'(\tilde{u}_n)\mathcal{U}_n + \int_{0}^{1} (1-s)N''(\tilde{u}_n+s\mathcal{U}_n)(\mathcal{U}_n,\mathcal{U}_n)\dd s.\\
\end{equation}
With the help of \eqref{eq3.49}, inserting \eqref{eq3.50} into \eqref{eq3.48} and using \eqref{eq2.2} (with $k=1, 2$), we eventually obtain an expansion of the exact solution 
\begin{equation}\label{eq3.51}
\tilde{u}_{n+1}=\tilde{u}_n+h \varphi_1(hL)F(\tilde{u}_n)+h^2 \varphi_2(h L)\tfrac{\dd }{\dd t}N(u(t))\big\lvert_{t=t_n}+ h^3 \mathcal{R}_n
\end{equation}
 with 
 \begin{equation}\label{eq3.52}
 \mathcal{R}_n= \int_{0}^{1} \ee^{(1-\theta )h L} \int_{0}^{1} \theta^2(1-s)\big( N''(\tilde{u}_n)u''(t_n+\theta h) +N''(\tilde{u}_n+s\mathcal{U}_n)(\mathcal{V}_n,\mathcal{V}_n) \Big)\dd s \dd\theta,
 \end{equation}
 where $\mathcal{V}_n=\tilde{u}'_n +  \theta h \int_{0}^{1} (1-s)u''(t_n+\theta h s) \dd s $. Note that this remainder is not the same as the one in the expansion of $u(t_{n+1})$ given in \cite{LO14b}, which requires Assumption~2. It is clear that $\|\mathcal{R}_n\|\leq C$ (uniformly) due to Assumptions 1 and 3.
\subsection{Local error analysis of schemes  $\mathtt{HImExp2J}$ and  $\mathtt{HImExp2N}$}
Using the expansion of the exact solution in \eqref{eq3.51}, we prove the following result concerning the local errors of schemes  \eqref{eq3.44} and \eqref{eq3.45} at time $t=t_{n+1}$.
\begin{lemma} \label{lm5.1}
Let the initial value problem \eqref{eq1.1} satisfy  Assumptions 1 and 3. Further assume that $L$ satisfies the condition \eqref{eq3.32} and that $L\tfrac{\dd }{\dd t}N(u(t))\big\lvert_{t=t_n}$ is uniformly bounded on $X$. Then, both schemes \eqref{eq3.44} and \eqref{eq3.45} have order of consistency 3.
\end{lemma}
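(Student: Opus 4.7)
The plan is to subtract the exact solution expansion~\eqref{eq3.51} from one step of each scheme started at the exact value $\tilde{u}_n$, and to show that every discrepancy is of size $\mathcal{O}(h^3)$. Writing the scheme output as $\overbar{u}_{n+1}$, the local error splits naturally into three pieces:
\begin{equation*}
\overbar{u}_{n+1}-\tilde{u}_{n+1}
= h\bigl[(I-\tfrac{1}{2}hL)^{-1}-\varphi_1(hL)\bigr]F(\tilde{u}_n)
+ \Bigl\{2h\varphi_2(hJ_n)\bigl(N(\overbar{U}_{n2})-N(\tilde{u}_n)\bigr) - h^2\varphi_2(hL)\tfrac{\dd}{\dd t}N(u(t))\big\lvert_{t=t_n}\Bigr\}
- h^3\mathcal{R}_n.
\end{equation*}
The first bracket is $\mathcal{O}(h^3)$ by Lemma~\ref{lm4.2} (bound~\eqref{eq3.34}), and the last term is $\mathcal{O}(h^3)$ by~\eqref{eq3.52} under Assumption~3.

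The substance of the proof lies in the middle bracket. First I would Taylor expand $N(\overbar{U}_{n2})-N(\tilde{u}_n)$ using identity~\eqref{eq3.46}; since $\overbar{\Delta}_n:=\overbar{U}_{n2}-\tilde{u}_n=\tfrac{1}{2}h(I-\tfrac{1}{2}hL)^{-1}F(\tilde{u}_n)$ is $\mathcal{O}(h)$ by~\eqref{eq3.35}, the quadratic remainder is $\mathcal{O}(h^2)$. Multiplying by $2h\varphi_2(hJ_n)$, which is uniformly bounded as noted before the statement of the lemma, pushes that remainder to $\mathcal{O}(h^3)$. Next I would use the resolvent identity $(I-\tfrac{1}{2}hL)^{-1}F(\tilde{u}_n) = F(\tilde{u}_n)+\tfrac{h}{2}L\tilde{w}_n$ with the notation of Lemma~\ref{lm4.2}; since $L\tilde{w}_n$ is uniformly bounded and $N'(\tilde{u}_n)$ is bounded under Assumption~3, this reduces the middle bracket to
\begin{equation*}
h^2\bigl[\varphi_2(hJ_n)-\varphi_2(hL)\bigr]\tfrac{\dd}{\dd t}N(u(t))\big\lvert_{t=t_n} + \mathcal{O}(h^3).
\end{equation*}

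The remaining and main obstacle is therefore to prove that $\bigl\|\bigl[\varphi_2(hJ_n)-\varphi_2(hL)\bigr]v\bigr\|=\mathcal{O}(h)$ when $v=\tfrac{\dd}{\dd t}N(u(t))|_{t=t_n}$. Using the integral representation $\varphi_2(z)=\int_0^1\ee^{(1-\theta)z}\theta\,\dd\theta$, this reduces to estimating $\ee^{(1-\theta)hJ_n}v-\ee^{(1-\theta)hL}v$. I would apply Duhamel's formula with $J_n=L+N'_n$:
\begin{equation*}
\ee^{\tau J_n}v-\ee^{\tau L}v=\int_0^{\tau}\ee^{(\tau-s)L}\,N'_n\,\ee^{sJ_n}v\,\dd s,
\end{equation*}
and note that the integrand is uniformly bounded ($\ee^{sJ_n}v$ is bounded since $J_n$ generates a strongly continuous semigroup, $N'_n$ is bounded by Assumption~3, and $\ee^{(\tau-s)L}$ is bounded by~\eqref{eq3.32}), so the integral is $\mathcal{O}(\tau)=\mathcal{O}(h)$. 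This gives the desired $\mathcal{O}(h^3)$ for scheme $\mathtt{HImExp2J}$.

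For $\mathtt{HImExp2N}$, the same template applies with $J_n$ replaced by $N'_n$ in the last step. The Duhamel identity now reads
\begin{equation*}
\ee^{\tau L}v-\ee^{\tau N'_n}v=\int_0^{\tau}\ee^{(\tau-s)N'_n}\bigl(L-N'_n\bigr)\ee^{sL}v\,\dd s,
\end{equation*}
and the key point is that $L\ee^{sL}v=\ee^{sL}Lv$ is uniformly bounded precisely because of the extra hypothesis that $L\tfrac{\dd}{\dd t}N(u(t))|_{t=t_n}$ is bounded, while $N'_n\ee^{sL}v$ is bounded by Assumption~3. This is the step where the side assumption in the lemma is genuinely needed, and it is the place where I expect the technical care to be concentrated. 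Once this $\mathcal{O}(h)$ bound is established, the same chain of estimates yields the $\mathcal{O}(h^3)$ local error for $\mathtt{HImExp2N}$, completing both cases.
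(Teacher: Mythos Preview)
Your proposal is correct and follows essentially the same strategy as the paper: subtract the exact-solution expansion~\eqref{eq3.51} from one scheme step, invoke the rational-approximation bound of Lemma~\ref{lm4.2} for the $(I-\tfrac12 hL)^{-1}-\varphi_1(hL)$ term, Taylor-expand $N(\overbar U_{n2})-N(\tilde u_n)$ via~\eqref{eq3.46}, use the identity $\tilde w_n=\tilde u_n'+\tfrac12 hL\tilde w_n$ to isolate $\tfrac{\dd}{\dd t}N(u(t))|_{t=t_n}$, and finally reduce everything to a Duhamel/variation-of-constants bound on a difference of $\varphi_2$ functions.

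Two small remarks. First, Lemma~\ref{lm4.2} is stated under Assumption~2, whereas here only Assumption~3 is available; the paper pauses to verify (via~\eqref{eq3.54}) that the relevant expansion~\eqref{eq3.37} still holds under Assumption~3, and you should do the same rather than cite Lemma~\ref{lm4.2} outright. Second, for $\mathtt{HImExp2N}$ your direct Duhamel identity
\[
\ee^{\tau L}v-\ee^{\tau N'_n}v=\int_0^{\tau}\ee^{(\tau-s)N'_n}(L-N'_n)\ee^{sL}v\,\dd s
\]
is cleaner than the paper's route, which detours through $\tilde J_n$ by writing $\varphi_2(h\tilde N'_n)-\varphi_2(hL)=\bigl(\varphi_2(h\tilde N'_n)-\varphi_2(h\tilde J_n)\bigr)+\bigl(\varphi_2(h\tilde J_n)-\varphi_2(hL)\bigr)$ and deriving the more elaborate representation~\eqref{eq3.64}. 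Both arguments exploit the side hypothesis on $L\tfrac{\dd}{\dd t}N(u(t))|_{t=t_n}$ in the same place (to control $L\ee^{sL}v=\ee^{sL}Lv$), so your shortcut loses nothing and is easier to read.
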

 \begin{proof}
 First, we study the local error of scheme \eqref{eq3.44}, i.e., $\mathtt{HImExp2J}$. For that we consider one step with initial value $\tilde{u}_n$, i.e.
 \begin{subequations}\label{eq3.53} 
\begin{align}
\overbar{U}_{n2}&= \tilde{u}_n+ \frac{1}{2} h(I-\frac{1}{2} h L)^{-1}F(\tilde{u}_n), \label{eq3.53a}  \\
\overbar{u}_{n+1}& =\tilde{u}_n + h (I-\frac{1}{2}h L)^{-1}F(\tilde{u}_n) + 2h\varphi_2(h \tilde{J}_n)(N(\overbar{U}_{n2})-N(\tilde{u}_n)). \label{eq3.53b}
\end{align}
\end{subequations}
Therefore, the local error of  \eqref{eq3.44}  at time $t_{n+1}$ is given by 
 \begin{equation}\label{eq3.53add} 
\bar{e}_{n+1,\mathtt{HImExp2J}} =\overbar{u}_{n+1}- \tilde{u}_{n+1}.
\end{equation}
Proceeding in the same manner as in Lemma 3.4.1 of \cite{LO12b}, we get 
 \begin{eqnarray}  \label{eq3.54}
\varphi_1 (h L) F(\tilde{u}_n )&=&\tilde{u}_{n}'+\frac{h}{2}\big(\tilde{u}_{n}''-2 \varphi_2 (h L) \tfrac{\dd }{\dd t}N(u(t))\big\lvert_{t=t_n}\big) \nonumber \\
&+&h^2 \varphi_3 (h L) (\tilde{u}_{n}'''- \tfrac{\dd^2 }{\dd t^2}N(u(t))\big\lvert_{t=t_n}).
\end{eqnarray}
 Clearly, due to Assumptions 1 and 3, we get back \eqref{eq3.37} from \eqref{eq3.54}. This shows that the result of Lemma~\ref{lm4.2} holds. Using the same notation $\tilde{w}_n=(I-\frac{1}{2}hL)^{-1}F(\tilde{u}_n )$, \eqref{eq3.53} can be rewritten as
  \begin{equation}\label{eq3.55} 
\overbar{u}_{n+1} =\tilde{u}_n + h \tilde{w}_n + 2h\varphi_2(h \tilde{J}_n)(N(\tilde{u}_n+ \frac{1}{2} h\tilde{w}_n)-N(\tilde{u}_n)).
\end{equation}
Employing \eqref{eq3.39}, applying \eqref{eq3.46} to $N(\tilde{u}_n+ \tfrac{1}{2} h\tilde{w}_n)$, and noting that $\tilde{w}_n=\tilde{u}'_n+ \tfrac{1}{2}h L\tilde{w}_n$ (see \eqref{eq3.36b}) one finally gets
  \begin{equation}\label{eq3.56} 
\overbar{u}_{n+1} =\tilde{u}_n + h \varphi_1(hL)F(\tilde{u}_n)+h^2 \varphi_2(h \tilde{J}_n)\tfrac{\dd }{\dd t}N(u(t))\big\lvert_{t=t_n} +h^3 \mathcal{\overbar{R}}_n
\end{equation}
with 
 \begin{equation}\label{eq3.57} 
 \begin{aligned}
 \mathcal{\overbar{R}}_n &=\frac{1}{4}L^2 \tilde{w}_n+ \varphi_3 (h L) L\frac{\dd }{\dd t}N(u(t))\big\lvert_{t=t_n}+\varphi_3 (h L) (\tilde{u}_{n}'''- \frac{\dd^2 }{\dd t^2}N(u(t))\big\lvert_{t=t_n})\\
&+ \frac{1}{2}\varphi_2 (h \tilde{J}_n) L\tilde{w}_n+\frac{1}{4}\int_{0}^{1} (1-s) N''(\tilde{u}_{n}+\frac{1}{2}sh\tilde{w}_n)(\tilde{w}_n,\tilde{w}_n)\dd s,
 \end{aligned}
 \end{equation}
 which is uniformly bounded due to the assumptions of  Lemma~\ref{th5.1}.
Inserting \eqref{eq3.51} and \eqref{eq3.56} into  \eqref{eq3.53add} gives
 \begin{equation} \label{eq3.58}
\bar{e}_{n+1,\mathtt{HImExp2J}} =h^2 (\varphi_2(h \tilde{J}_n)-\varphi_2(h L)) \frac{\dd }{\dd t}N(u(t))\big\lvert_{t=t_n} +h^3 (\mathcal{\overbar{R}}_n- \mathcal{R}_n).
\end{equation}
Since $\tilde{J}_n=L+\tilde{N}'_n$ (here $\tilde{N}'_n=N'(\tilde{u}_n)$), it is easy to show by using \eqref{eq2.2} and applying the variation-of-constants formula to the differential equation $v'(t)=\tilde{J}_n v(t)=Lv(t)+ \tilde{N}'_n v(t), \ v(0)=I$ that
 \begin{equation} \label{eq3.59}
\varphi_2(h \tilde{J}_n)-\varphi_2(h L)=h\phi (hL,h\tilde{N}'_n),
\end{equation}
where
 \begin{equation} \label{eq3.60}
\phi (hL,h\tilde{N}'_n)=\int_{0}^{1}\int_{0}^{1} \ee^{(1-s)(1-\theta )h L} \theta(1-\theta)\tilde{N}'_n \ee^{s(1-\theta)h \tilde{J}_n} \dd s \dd \theta
\end{equation}
which is a bounded operator.
We now insert \eqref{eq3.59} into \eqref{eq3.58} to obtain 
 \begin{equation} \label{eq3.61}
\bar{e}_{n+1,\mathtt{HImExp2J}} =h^3 \big(\phi (hL,h\tilde{N}'_n) \frac{\dd }{\dd t}N(u(t))\big\lvert_{t=t_n} +\mathcal{\overbar{R}}_n- \mathcal{R}_n \big)=\mathcal{O}(h^3).
\end{equation}

Next, we consider the local error of scheme \eqref{eq3.45}, i.e., $\mathtt{HImExp2N}$. Let denote the local error of  it at time $t_{n+1}$ by $\bar{e}_{n+1,\mathtt{HImExp2N}}$. It can be seen that one can analyze this local error in a very similar way as done for scheme $\mathtt{HImExp2J}$. Thus we only focus on the following new aspects. Instead of \eqref{eq3.58}, we now get 
\begin{equation} \label{eq3.62}
\bar{e}_{n+1,\mathtt{HImExp2N}} =h^2 (\varphi_2(h \tilde{N}'_n)-\varphi_2(h L)) \frac{\dd }{\dd t}N(u(t))\big\lvert_{t=t_n} +h^3 (\mathcal{\overbar{R}}_n- \mathcal{R}_n)
\end{equation}
with $\tilde{N}'_n$ in place of $\tilde{J}_n$ appearing in \eqref{eq3.58} as well as in \eqref{eq3.57} (for $\mathcal{\overbar{R}}_n$). Using the observation 
 \begin{equation} \label{eq3.63}
\varphi_2(h \tilde{N}'_n)-\varphi_2(h L)=\big(\varphi_2(h \tilde{N}'_n)-\varphi_2(h \tilde{J}_n)\big)+ \big(\varphi_2(h \tilde{J}_n)-\varphi_2(h L)\big),
\end{equation}
we can show, by again using \eqref{eq2.2}, applying the variation-of-constants formula to the differential equation $y'(t)=\tilde{N}'_n y(t)=\tilde{J}_n y(t)-Ly(t), \ y(0)=I$, and employing the fact that $\ee^z=1+z\varphi_1(z)$,  that
 \begin{equation} \label{eq3.64}
\varphi_2(h \tilde{N}'_n)-\varphi_2(h L)=h\Big(\psi_1 (h\tilde{J}_n)L-h\psi_2 (h\tilde{J}_n, h\tilde{N}'_n) +\phi (hL,h\tilde{N}'_n)\Big).
\end{equation}
Here, $\phi (hL,h\tilde{N}'_n)$ is given in \eqref{eq3.60} and 
 \begin{equation} \label{eq3.65}
\psi_1 (h\tilde{J}_n)=\int_{0}^{1} \int_{0}^{1}\ee^{(1-\theta )h\tilde{J}_n} \theta(\theta-1) \dd s \dd \theta
\end{equation}
which is also a bounded operator, and 
 \begin{equation} \label{eq3.65add}
\psi_2 (h\tilde{J}_n, h\tilde{N}'_n)=\int_{0}^{1} \int_{0}^{1}\ee^{(1-s)(1-\theta )h\tilde{J}_n} s\theta(1-\theta)^2 L\tilde{N}'_n \varphi_1(s (1-\theta)h \tilde{N}'_n) \dd s   \dd \theta
\end{equation}
is bounded as well (due to the assumption $L\tfrac{\dd }{\dd t}N(u(t))\big\lvert_{t=t_n}= L\tilde{N}'_n u'(t_n)$ is uniformly bounded).

Inserting \eqref{eq3.64} into \eqref{eq3.62} clearly shows that
\begin{equation} \label{eq3.66}
\bar{e}_{n+1,\mathtt{HImExp2N}}=h^3 \big(\psi_1 (h\tilde{J}_n)L\tfrac{\dd }{\dd t}N(u(t))\big\lvert_{t=t_n} \big)+ \mathcal{O}(h^3)=\mathcal{O}(h^3).
\end{equation}
\end{proof}
We are now ready to state the main result of this section.
\begin{theorem}\label{th5.1}
Let the initial value problem \eqref{eq1.1} satisfy  the conditions of Lemma~\ref{lm5.1}. Then, the numerical solution $u_n$ of the hybrid implicit-explicit exponential methods \ $\mathtt{HImExp2J}$ or  $\mathtt{HImExp2N}$ satisfies the error bound
\begin{equation}\label{eq3.67}
\| u_n -u(t_n)\|\leq C h^2
\end{equation}
uniformly on \ $t_0 \leq  t_n =t_0+nh \leq  T$ with a constant $C$ that depends on $T-t_0$, but is independent of $n$ and $h$.
\end{theorem}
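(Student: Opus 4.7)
The plan is to mirror the convergence argument of Theorem~\ref{th3.2}: decompose the global error into a local piece and a propagated piece, use Lemma~\ref{lm5.1} for the local part, and close a recursion via an $A$-stable propagator and a discrete Gronwall lemma. Concretely, writing $\bar{u}_{n+1}$ for one step of the scheme starting from $\tilde{u}_n$ (as in \eqref{eq3.53}), split
\begin{equation*}
e_{n+1} = \bigl(u_{n+1}-\bar{u}_{n+1}\bigr) + \bigl(\bar{u}_{n+1}-\tilde{u}_{n+1}\bigr) = \bigl(u_{n+1}-\bar{u}_{n+1}\bigr) + \bar{e}_{n+1},
\end{equation*}
and use Lemma~\ref{lm5.1} to record $\bar{e}_{n+1} = \mathcal{O}(h^3)$, for either $\mathtt{HImExp2J}$ or $\mathtt{HImExp2N}$.

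Next, I would subtract the one-step perturbed scheme from the actual scheme. Exploiting the identity
\begin{equation*}
I + h\bigl(I-\tfrac12 hL\bigr)^{-1}L = \bigl(I-\tfrac12 hL\bigr)^{-1}\bigl(I+\tfrac12 hL\bigr) =: R(hL),
\end{equation*}
the linear $L$-contribution yields a propagator $R(hL)$ acting on $e_n$, and the remaining pieces group into terms of the form
\begin{equation*}
\bigl(I-\tfrac12 hL\bigr)^{-1}\bigl(N(u_n)-N(\tilde{u}_n)\bigr) \quad \text{and} \quad \varphi_2(hJ_n)\bigl(N(U_{n2})-N(u_n)\bigr) - \varphi_2(h\tilde{J}_n)\bigl(N(\overbar{U}_{n2})-N(\tilde{u}_n)\bigr),
\end{equation*}
together with the analogous stage difference $E_{n2}=U_{n2}-\overbar{U}_{n2}$. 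Using the same Taylor-with-remainder trick as in \eqref{eq3.20}--\eqref{eq3.21} but in the weaker form allowed by Assumption~3, i.e.\ only a second-order expansion with bounded $N''$, one writes $N(u_n)-N(\tilde{u}_n) = N'(\tilde{u}_n)e_n + \tilde{r}_n$ with $\|\tilde{r}_n\|\le C\|e_n\|^2$, and likewise for the stage differences. The key point is that the stiff factor $N'(\tilde{u}_n) = \tilde{J}_n - L$ is absorbed into the bounded operators $\varphi_2(h\tilde{J}_n)$ and $(I-\tfrac12 hL)^{-1}$ (uniformly bounded by the discussion following \eqref{eq3.46} and by the $A$-stability bound \eqref{eq3.35}), so one arrives at a relation
\begin{equation*}
u_{n+1}-\bar{u}_{n+1} = R(hL)\,e_n + h\,\mathcal{B}_n(e_n)\,e_n,
\end{equation*}
with $\mathcal{B}_n(e_n)$ a bounded operator depending only on $\|e_n\|$ through the quadratic remainders. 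The same computation, but with $\tilde{J}_n$ replaced by $\tilde{N}'_n$ and an extra use of the identity \eqref{eq3.64}, covers $\mathtt{HImExp2N}$.

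Combining these ingredients gives the closed recursion
\begin{equation*}
e_{n+1} = R(hL)\,e_n + h\,\mathcal{B}_n(e_n)\,e_n + \bar{e}_{n+1},
\end{equation*}
which on unrolling with $e_0=0$ yields
\begin{equation*}
e_n = h\sum_{j=0}^{n-1} R(hL)^{\,n-j}\Bigl(\mathcal{B}_j(e_j)e_j + \tfrac{1}{h}\bar{e}_{j+1}\Bigr).
\end{equation*}
The stability bound $\|R(hL)^{n-j}\|\le 1$ follows exactly as in \eqref{eq3.40c} from $A$-stability of $(1+z/2)/(1-z/2)$ under condition \eqref{eq3.32}. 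Since $\bar{e}_{j+1}=\mathcal{O}(h^3)$, the inhomogeneous part is $\mathcal{O}(h^2)$, and applying the discrete Gronwall lemma (as in \cite{Em05}) on $[t_0,T]$ gives $\|e_n\|\le Ch^2$ uniformly.

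The main obstacle I anticipate is the nonlinear perturbation term involving the difference $\varphi_2(hJ_n)-\varphi_2(h\tilde{J}_n)$ acting on $N(U_{n2})-N(u_n)$, since under Assumption~3 the Jacobian $N'_n$ need not be small and the arguments of $\varphi_2$ differ at $u_n$ and $\tilde{u}_n$. The resolution is to use a variation-of-constants representation analogous to \eqref{eq3.59}--\eqref{eq3.60}, which extracts an explicit factor of $h$ times bounded operators, thus making this term absorbable into $h\,\mathcal{B}_n(e_n)e_n$ without any regularity assumption on higher derivatives of $N$ beyond those provided by Assumption~3.
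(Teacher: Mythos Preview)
Your proposal follows essentially the same route as the paper: the same error splitting $e_{n+1}=(u_{n+1}-\bar u_{n+1})+\bar e_{n+1}$, the same propagator $R(hL)=(I-\tfrac12 hL)^{-1}(I+\tfrac12 hL)$, the same recursion, the same stability bound \eqref{eq3.40c}, and the same Gronwall finish. Two minor points of divergence are worth noting. First, the paper does not Taylor-expand $N(u_n)-N(\tilde u_n)$; it simply invokes the local Lipschitz property of $N$ (guaranteed by Assumption~3 on the finite-dimensional strip) to bound each piece of the perturbation term $T_n$ in \eqref{eq3.70} by $C\|e_n\|$ directly. Your ``absorption'' claim is therefore unnecessary and somewhat imprecise: nothing algebraic cancels the stiff factor $N'(\tilde u_n)$; the constant $C$ in the final estimate genuinely depends on $\sup\|N'\|$ along the strip, which is finite under Assumption~3 but not rendered small by composition with $(I-\tfrac12 hL)^{-1}$ or $\varphi_2(h\tilde J_n)$. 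Second, for the term $(\varphi_2(hJ_n)-\varphi_2(h\tilde J_n))(N(\overbar U_{n2})-N(u_n))$ the paper cites a perturbation bound (Lemma~2.4.3 of \cite{LO14a}) giving $\|\varphi_2(hJ_n)-\varphi_2(h\tilde J_n)\|\le Ch\|e_n\|$, rather than redoing a variation-of-constants identity; for $\mathtt{HImExp2N}$ the analogous estimate yields only $\|T_j\|\le C\|e_j\|+Ch^2$, which still suffices for the Gronwall step.
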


\begin{proof}
First, we prove the convergence result for scheme $\mathtt{HImExp2J}$. 
Let $ e_{n+1} = u_{n+1} - u(t_{n+1})=u_{n+1} - \tilde{u}_{n+1}$ denote the global error of scheme  \eqref{eq3.44} at time $t_{n+1}$. We have
\begin{equation} \label{eq3.68}
e_{n+1}=u_{n+1}-\bar{u}_{n+1}+ \bar{e}_{n+1,\mathtt{HImExp2J}}.
\end{equation}
Subtracting \eqref{eq3.53b} from \eqref{eq3.44b}, inserting the obtained result into \eqref{eq3.68}, and using $F(u)=Lu+N(u)$ shows that
\begin{equation} \label{eq3.69}
e_{n+1}=R(hL)e_n + h T_n+\bar{e}_{n+1,\mathtt{HImExp2J}}
\end{equation}
with $R(hA)=I+ (I-\frac{1}{2}hL)^{-1}hL=(I- \frac{1}{2}hL)^{-1} (I+\frac{1}{2}hL)$
and 
\begin{equation} \label{eq3.70}
\begin{aligned}
T_n=&((I- \frac{1}{2}hL)^{-1} -2 \varphi_2(h \tilde{J}_n))(N(u_n)-N(\tilde{u}_n))+2 \varphi_2(h J_n)(N(U_{n2})-N(\overbar{U}_{n2}))\\
+& 2 (\varphi_2(h J_n) -\varphi_2(h \tilde{J}_n))(N(\overbar{U}_{n2})-N(u_n)).
\end{aligned}
\end{equation}
Subtracting \eqref{eq3.53a} from \eqref{eq3.44a} and using the identity $I+ \frac{1}{2}hL(I- \frac{1}{2}hL)^{-1}=(I- \frac{1}{2}hL)^{-1}$ gives
\begin{equation} \label{eq3.71}
U_{n2}-\overbar{U}_{n2}=(I- \frac{1}{2}hL)^{-1}e_n+ \frac{1}{2}h(I- \frac{1}{2}hL)^{-1}(N(u_n)-N(\tilde{u}_n)).
\end{equation}
Using the Lipschitz property of $N(u)$ and employing the bound $\|\varphi_2(h J_n) -\varphi_2(h \tilde{J}_n)\|\leq Ch\|e_n\|$ (as a consequence of Lemma 2.4.3 in \cite{LO14a}), we derive at once $\|T_n\|\leq C \|e_n\|$ as long as the global errors $e_n$ remain in a sufficiently small neighborhood of 0.
Solving recursion \eqref{eq3.69} and using $e_0=0$ finally yields
\begin{equation} \label{eq3.72}
e_{n}=h\sum_{j=0}^{n-1} (R(hL))^{n-j} \big(T_j  + \frac{1}{h}\bar{e}_{n+1,\mathtt{HImExp2J}}\Big).
\end{equation}
Now using  \eqref{eq3.61} and the bound  \eqref{eq3.40c} ($\| R(hL))^{n-j}\| \leq 1$) we can estimate 
\begin{equation} \label{eq3.73}
\|e_n\| \leq C h\sum_{j=0}^{n-1} (\|e_j\|+h^2).
\end{equation}
Again, an application of a discrete Gronwall lemma to \eqref{eq3.73} (for instance, see \cite{Em05}) shows the desired bound \eqref{eq3.67}.

Next, because of the similarity between the structure of the two schemes $\mathtt{HImExp2N}$ and $\mathtt{HImExp2J}$, the convergence proof for $\mathtt{HImExp2N}$  method can be carried out in a very similar way. We thus obtain a representation of the global error of scheme \eqref{eq3.45} at time $t_{n}$ as
\begin{equation} \label{eq3.74}
e_{n}=h\sum_{j=0}^{n-1} (R(hL))^{n-j} \big(T_j  + \frac{1}{h}\bar{e}_{n+1,\mathtt{HImExp2N}} \Big).
\end{equation}
However, here $T_j$ is not the same as the one in  \eqref{eq3.70}. In fact it is given by
\begin{equation} \label{eq3.75}
\begin{aligned}
T_j=&((I- \frac{1}{2}hL)^{-1} -2 \varphi_2(h \tilde{N}_j))(N(u_j)-N(\tilde{u}_j))+2 \varphi_2(h N_j)(N(U_{j2})-N(\overbar{U}_{j2}))\\
+& 2 (\varphi_2(h N_j) -\varphi_2(h \tilde{N}_j))(N(\overbar{U}_{j2})-N(u_j)).
\end{aligned}
\end{equation}
In this case one can show that $\| T_j\|\leq C\|e_j\|+ Ch^2$ (by using the boundedness of $(I- \frac{1}{2}hL)^{-1} -2 \varphi_2(h \tilde{N}_j), \ \varphi_2(h N_j)$, the Lipschitz property of $N(u)$, and the estimate $\|\varphi_2(h N_j) -\varphi_2(h \tilde{N}_j\|\leq Ch\|N_j-\tilde{N}_j\|\leq Ch$, which can be easily proved by applying Lemma~2.4.2 given in  \cite{LO14a}). Using \eqref{eq3.66} (which implies $\|\bar{e}_{n+1,\mathtt{HImExp2N}}\|\leq Ch^3$) and the bound  \eqref{eq3.40c}, i.e. $\| R(hL))^{n-j}\| \leq 1$, we again get the same bound for the global error of scheme $\mathtt{HImExp2N}$ as in \eqref{eq3.73} which proves the bound \eqref{eq3.67}.

\end{proof}


\section{Numerical Experiments}\label{sec:numericalExperiments}

\subsection{Integrators and implementation}
In this section we verify theoretical predictions regarding the accuracy and efficiency of the newly constructed IMEXP schemes { \tt ImExpRK2} \eqref{eq3.42}, { \tt HImExp2J} \eqref{eq3.44}, and {\tt HImExp2N} \eqref{eq3.45}.  Several IMEX schemes were compared in \cite{Ruuth} and it was shown that the second-order semi-implicit backwards differentiation formula-based IMEX scheme {\tt 2-sBDF} has the least stringent stability restrictions on the time step and consequently the best computational efficiency for reaction-diffusion problems arising in pattern formation.  Same conclusion was reached in \cite{SBL12}.  We thus choose  to compare the new IMEXP schemes with the widely used {\tt 2-sBDF} integrator \cite{Ruuth}:
\begin{equation}
3U_{n+1} - 4U_n - U_{n-1} = 2h(LU_{n+1} + 2N(U_n) - N(U_{n-1}).
\end{equation}
Clearly since operator $N$ is treated explicitly, there are stability restrictions on the time step of {\tt 2-sBDF} associated with the degree of stiffness in $N$. Since the stability constraints on the time step of the new IMEXP schemes are less restrictive, we expect to be able to take a larger time step compared to {\tt 2-sBDF}. However, treating $N$ exponentially imposes additional computational cost as compared with {\tt 2-sBDF}.  Below we present numerical experiments that demonstrate that as the stiffness of $N$ is increased the ability to take a significantly larger time step outweighs the additional computational cost per time step and the IMEXP schemes can outperform {\tt 2-sBDF}.

All integrators were implemented in MATLAB. The adaptive Krylov algorithm as described in \cite{TLP12} was used to compute products of matrix  $\varphi$-functions and vectors. MATLAB's built-in generalized minimal residual method {\tt gmres} function was used to compute the implicit terms with a tolerance set at$10^{-12}$ for all problems.  For demonstration and comparison purposes, we chose to use MATLAB's preconditioned {\tt gmres} with a sparse incomplete Cholesky factorization ({\tt ichol}) as the preconditioner.   The default parameter values were used except in cases where a given tolerance was desired.  All deviations from the default parameter values are detailed in descriptions of the individual problems below. For each problem, the same set of parameters were used for all integrators.  All simulations were performed with a constant time step starting with $h_1$ and the time step sizes were halved $h_i=h_1/2^{i-1},$ $i=2,\dots,5$ to generate the graphs. Table (\ref{table:timestepSize}) shows the largest time step size taken for each of the problems and each of the methods.

\subsection{Test problems and verification of accuracy}
We choose the following three test problems that satisfy Assumptions 1-3 from the previous section.  Note that the first problem, the 1D semilinear parabolic equations from \cite{HO05}, was originally designed to demonstrate the order reduction non-stiffly accurate methods can experience when a problem is very stiff. However, this problem is not very computationally intensive and thus we only include it to verify the accuracy of our method and do not use it for performance analysis.  

%

\noindent \emph{1D Semilinear parabolic.} One-dimensional semilinear parabolic problem \cite{HO05}
					\begin{equation*}
					\frac{\partial u}{\partial t}(x,t)-\frac{\partial^2 u}{\partial x^2}(x,t)=\int_0^1 u(x,t)dx + \Phi(x,t), \quad x\in [0,1], \quad t\in [0,1]
					\end{equation*}		
		  with homogeneous Dirichlet boundary conditions.  The source function $\Phi$ is chosen so that $u(x,t)=x(1-x)e^t$ is the exact solution.  
To achieve the desired tolerance we increased the default maximum number of GMRES iterations in MATLAB routines to 500.

  \noindent \emph{Allen-Cahn 2D.} Two-dimensional stiff Allen-Cahn equation with periodic boundary conditions \cite{Zhu2015}:
		$$\begin{aligned}
&\frac{\partial u}{\partial t}=\Delta u - \frac{1}{\epsilon^2}(u^3-u),\quad x\in [-0.5,0.5]^2, \quad t\in [0,0.075]\\
		&u(0,x)=\tanh\left(\frac{R_0-\|{x}_2\|}{\sqrt{2}\epsilon}\right),
		\end{aligned},$$
		where   $R_0=0.4$ and  $\epsilon$ specified to be $0.01, 0.02,$ and $0.005$. 

\noindent\emph{
		Schnakenberg 2D.}  Two-dimensional Schnakenberg system \cite{schnakenberg, SBL12}
        			\begin{equation*} \label{eqn:schnacken}
			\begin{array}{l}
			\dfrac{\partial u}{\partial t} = \gamma(a-u+u^2v)+\Delta u,\\
			\dfrac{\partial v}{\partial t}= \gamma(b-u^2v)+d\Delta v,\quad (x,y) \in [0,1]^2
			\end{array}
			\end{equation*}
			with homogeneous Neumann boundary conditions, $a=0.1$, $b=0.9$, $d=10$, and  $\gamma = 1000 \;\&\; \gamma= 10000$.  The initial conditions were chosen to be perturbations of the equilibrium $(\bar{u},\bar{v})=(a+b,b/(a+b)^2)$ as in \cite{madzvamuse2007,SBL12}.
Form this problem the default restart parameter for the MATLAB's GMRES routine was changed from 10 to 20. 

 The Laplacian term $\Delta$ in all problems was discretized using the standard second order finite differences.  For 2D Allen-Cahn equation we use 150 nodes while for 2D Schnackenberg's system 128 spatial discretization points in each spatial dimension.  Except for the semilinear parabolic problem where an exact solution is provided, a reference solution was computed for the remaining problems using MATLAB's {\tt ode15s} integrator with absolute and relative tolerances set to $10^{-14}$. The error was defined as the discrete infinity (maximum) norm of the difference between the computed and the reference solutions.  Figure \ref{fig:loglog1} shows the order attained by all methods for each of the problems.  For convenience we included a line of slope two (dotted) in the graphs.  As can be seen from this plot all methods achieve the second order of accuracy as predicted by the theory. 
 
  \begin{table}[H]
\centering
\caption{Largest time step sizes taken for each of the problems and each of the methods.}
\label{table:timestepSize}
\begin{tabular}{lcccccc}
                 & \multicolumn{3}{c}{2D Schnakenberg $N=128^2$} & \multicolumn{3}{c}{2D Allen-Cahn $N=150^2$} \\
                 & $\gamma=1000$      & $\gamma=10000$     & $\gamma=50000$    & $\epsilon=0.02$     & $\epsilon=0.01$    & $\epsilon=0.005$  \\ \hline
{\tt HImExp2J} & $1\cdot 10^{-2}$   & $1\cdot 10^{-3}$   & $5\cdot 10^{-5}$  & $5\cdot 10^{-4}$   & $2\cdot 10^{-4}$  &      $5\cdot 10^{-6} $           \\
{\tt HImExp2N} & $1\cdot 10^{-3}$   & $1\cdot 10^{-4}$   & $5\cdot 10^{-6}$  & $1\cdot 10^{-4}$   & $5\cdot 10^{-5}$  &         $1\cdot 10^{-6}$         \\
{\tt ImExpRK2}  & $1\cdot 10^{-3}$   &       $1\cdot 10^{-4}$          &       $1\cdot 10^{-6}$        & $2\cdot 10^{-4}$   & $5\cdot 10^{-5}$  &             $1\cdot 10^{-5}$     \\
{\tt 2-sBDF}   & $5\cdot 10^{-4}$   & $1\cdot 10^{-5}$   & $5\cdot 10^{-6}$  & $2\cdot 10^{-4}$   & $5\cdot 10^{-5}$  &         $5\cdot 10^{-7}$        
\end{tabular}
\end{table}

 \begin{figure}[H]
\centering
\caption{Log-log plots of the error vs. time step size. For convenience a line with slope two (dotted) is shown.}
    \label{fig:loglog1}
\begin{tabular}{C{.5\textwidth}R{3pt}C{.5\textwidth}}
 (a) Semilinear parabolic $N=500$ \newline  [1.3ex]
 {\includegraphics[width=2.5in, keepaspectratio]{OrderSemilinearParabolicL}} & &
{\includegraphics[scale=0.18]{Legend} }   \\
\multicolumn{3}{c}{2D Schnakenberg $N=128^2$} \\[-0.5ex]
(b) $\gamma =10^3$ \newline [0.8ex]
{\includegraphics[width=2.5in, keepaspectratio]{OrderSchnakenbergGamma1000L} }  &  &
(c)  $ \gamma =10^4$  \newline [1.6ex]
{\includegraphics[width=2.5in, keepaspectratio]{OrderSchnakenbergGamma10000L}} \\
\multicolumn{3}{c}{2D Allen-Cahn  $ N=150^2$}\\[-0.5ex] 
(d)  $\epsilon=0.02$\newline [0.8ex]
{\includegraphics[width=2.5in, keepaspectratio]{OrderAllenCahn2dlambda02L} }  &  & 
(e)  $\epsilon=0.01$  \newline [1.6ex]
 {\includegraphics[width=2.5in, keepaspectratio]{OrderAllenCahn2dlambda01L} }                   
\end{tabular}
\end{figure}


\subsection{Performance evaluation}

Figures \ref{fig:prec_AllenCahn} and \ref{fig:precSchnackenberg} show the precision diagrams for all four methods for the 2D Allen-Cahn equation and the 2D Schnackenberg model respectively.  For Allen-Cahn equation the stiffness of the nonlinear operator $N$ is increased with the decreasing value of the $\epsilon$ parameter. We choose $\epsilon= 0.02$, 0.01 and 0.005, with the most stiff problem corresponding to $\epsilon= 0.005$.    The nonlinear operator of the 2D Schnakenberg's system becomes more stiff as the parameter $\gamma$ is increased. We set $\gamma = 10^{3}$, $10^4$ and $5 \cdot 10^4$.  The graphs show both the nonpreconditioned (solid lines) as well as the preconditioned (dashed lines) version of each of the algorithms.  As the Figures \ref{fig:prec_AllenCahn} and \ref{fig:precSchnackenberg} demonstrate, method { \tt HImExp2J} is consistently the best performing scheme for all of the simulations for both problems.  For Allen-Cahn equation methods { \tt HImExp2N} and {\tt ImExpRK2}  are slower than {\tt 2-sBDF} for the parameters chosen here due to the higher computational cost per time step, but the largest time step that can be taken with these IMEXP schemes is an order of magnitude larger than the {\tt 2-sBDF} method (Table \ref{table:timestepSize}). It is feasible to imagine that there exist problems for which this will be advantageous.  In fact for the most stiff version of the 2D Schnakenberg system  (Figure \ref{fig:precSchnackenberg} (c,d)), { \tt HImExp2N} and {\tt ImExpRK2} begin to outperform {\tt 2-sBDF}.  {\tt HImExp2J}, however, remains the best performing scheme for the Schnakenberg's equation as well. As discussed in section 2.2 this is anticipated due to the fact that {\tt HImExp2J} can offer better stability when integrating stiff nonlinearities.

To detail the computational savings offered by the { \tt HImExp2J}  method compared to {\tt 2-sBDF}  we present Table \ref{table:CPUtimes} where for several given 
tolerances we compute the CPU time required by the { \tt HImExp2J}  scheme as a percentage of the CPU execution time taken by {\tt 2-sBDF}. When exact CPU time is not available for a given tolerance we interpolate its value from the corresponding precision graphs.  The data in the table clearly shows that the computational savings offered by { \tt HImExp2J} grow as the stiffness of the nonlinear operator $N$ is increased.  The savings as even more pronounced for the preconditioned versions of the algorithms since the computational cost per time step is decreased and consequently the ability to take a larger time step becomes more important.  Note that without the loss of generality in our conclusions we can vary the length of the integration interval. Since the total computational time to achieve accuracy of $10^{-3}$ with {\tt 2-sBDF} for the long time interval for the Schnakenberg system became too long (see Figure \ref{fig:precSchnackenberg}c), we decreased the total integration interval to be able to get CPU times ratio for the stiffest version of the problem with $\gamma = 5 \cdot 10^4$. 

The results of our numerical simulations clearly verifies the theoretical predictions of the performance of the new IMEXP schemes and presents these methods as a promising alternative to the IMEX integrators for problems with stiff nonlinearity $N$. We stress that ideas presented in the paper can be easily extended to construct many additional methods. For example, one could use similar techniques to address problems where operator $L(u)$ is nonlinear but a good preconditioner is available.  Other combinations of the uses of operators $L'$, $N'$ and $J$ are possible. We defer these developments to our future publications along with the construction of higher order IMEXP methods.

\begin{table}[H]
\begin{minipage}{\textwidth}
\centering
\caption{Approximated CPU time for {\tt HimExp2J}  as a percentage of the CPU time for {\tt 2-sBDF}  given a prescribed accuracy. }
\label{table:CPUtimes}
\begin{tabular}{c}
(a) Allen-Cahn $N=150^2$\\
{\begin{tabular}{clccc}
\hline
Accuracy                   & \multicolumn{1}{c}{} & $\epsilon=0.02$ & $\epsilon=0.01$ & $\epsilon=0.005$ \\ \hline
\multirow{2}{*}{$10^{-2}$} & Non-Preconditioned   & 55\%           & 56\%           &                44\% \\
                           & Preconditioned       & 51\%           & 56\%           &                44\% \\ \hline
\multirow{2}{*}{$10^{-3}$} & Non-Preconditioned   &    43\%            &     54\%           &                45\% \\
                           & Preconditioned       &     35\%          &    54\%           &                42\%\\ \hline
\end{tabular}
}\\

\\
(b) Schnakenberg $N=128^2$\\
{\begin{tabular}{clccc}
\hline
Accuracy                   & \multicolumn{1}{c}{} & $\gamma=1000$ & $\gamma=10000$ & $\gamma=50000$\footnote{$t_{\textrm{end}}=0.01$} \\ \hline
\multirow{2}{*}{$10^{-1}$} & Non-Preconditioned   &  55 \% &      33\%       &    33\% \\
                           & Preconditioned       &    171\%  &     31\%        &      24\% \\ \hline
\multirow{2}{*}{$10^{-2}$} & Non-Preconditioned   &       44\% &    32\%            &                27\% \\
                           & Preconditioned       &         120\%  &  22\%            &                20\% \\ \hline
  \multirow{2}{*}{$10^{-3}$} & Non-Preconditioned   &       48\%        &     19\%           &                24\% \\
                           & Preconditioned       &         70\%  &      13\%        &                20\% \\ \hline
\end{tabular}
}
\end{tabular}
\end{minipage}
\end{table}


\begin{figure}[H]
\centering
   \caption{2D Schnakenberg problem ($N=128^2$): CPU execution time versus error for constant time step}
    \label{fig:precSchnackenberg}
\begin{tabular}{C{.5\textwidth}R{2pt}C{.5\textwidth}}
(a)  $\gamma=10^3$, $t_{\textrm{end}}=0.1$ \newline [1.2ex]
{\includegraphics[width=2.8in,keepaspectratio]{PrecisionSchnakenbergGamma1000L} }& &  
(b) $\gamma=10^4$,  $t_{\textrm{end}}=0.1$ \newline [1.3ex]
{\includegraphics[width=2.85in,keepaspectratio]{PrecisionSchnakenbergGamma10000_2L} }\\[-1ex]
(c)   $\gamma =5\cdot 10^{4}$,  $t_{\textrm{end}}=0.1$  \newline [1.3ex]
{\includegraphics[width=2.8in,keepaspectratio]{PrecisionSchnakenbergGamma50000_2L} }\vspace{-2pt} & &   
{\hfill \includegraphics[scale=0.18]{Legend.eps} } \vspace{2pt}\\
(d)  $\gamma=5\cdot 10^{4}$, $t_{\textrm{end}}=0.01$  \newline [1.2ex]
{\includegraphics[width=2.8in,keepaspectratio]{PrecisionSchnakenbergGamma50000Tend01_2L} }
\end{tabular}
\end{figure}


\begin{figure}[H]
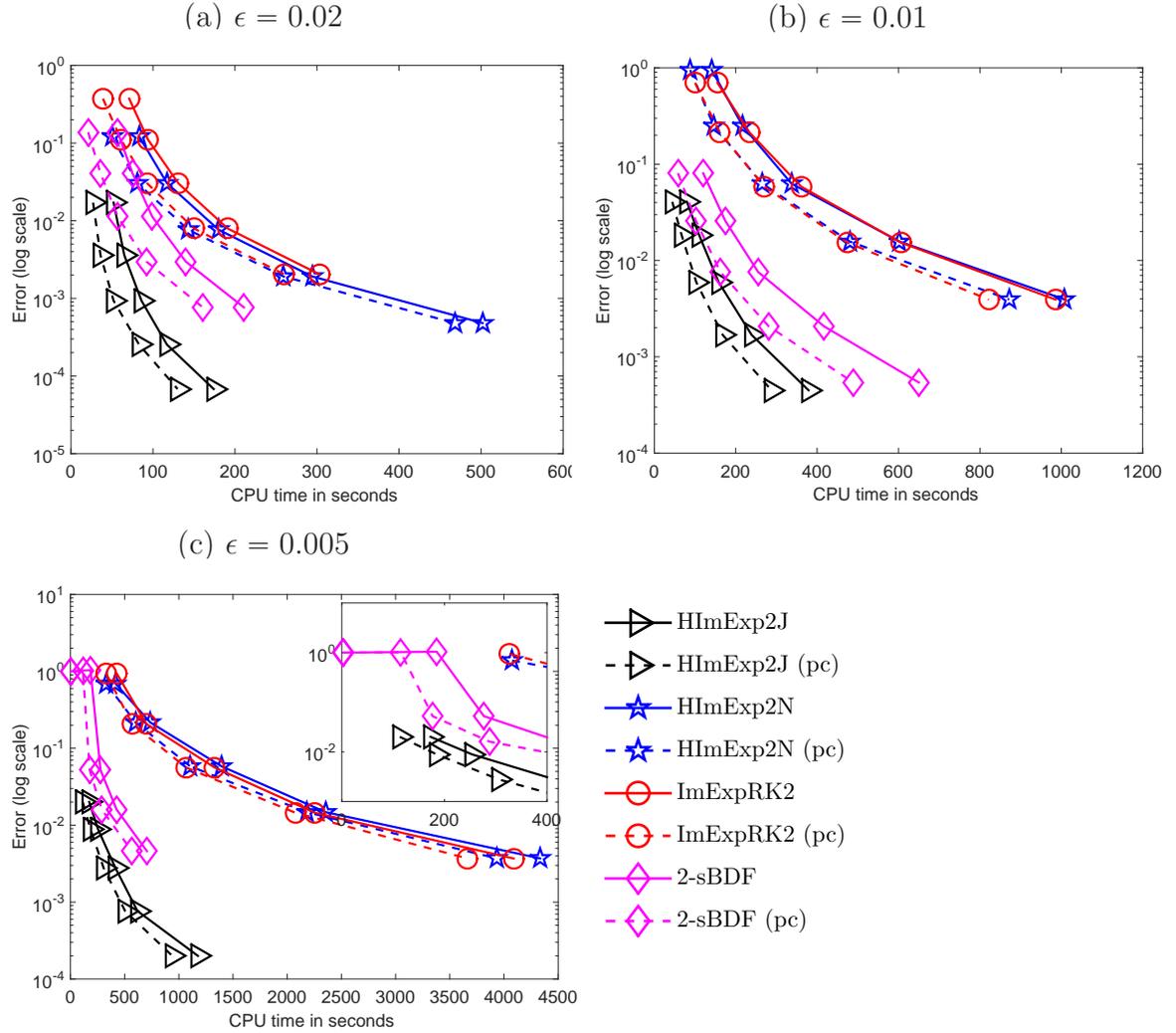

\centering
   \caption{2D Allen-Cahn problem ($N=150^2$):  CPU execution time versus error for constant time step}
    \label{fig:prec_AllenCahn}
\begin{tabular}{C{.5\textwidth}R{3pt}C{.5\textwidth}}
 (a) $\epsilon=0.02$ \newline [1.2ex]
   {\includegraphics[width=3in,keepaspectratio]{PrecisionAllenCahnlambda02L} } &    & 
(b) $\epsilon=0.01$ \newline [1.3ex]
{\includegraphics[width=3in,keepaspectratio]{PrecisionAllenCahnlambda01L}} \\
(c) $\epsilon=0.005$\newline [1.3ex]
{\includegraphics[width=3in,keepaspectratio]{PrecisionAllenCahnLambda005_2L} }&& 
{\hfill \includegraphics[scale=0.2]{Legend} } \vspace{2pt} \\
\end{tabular}
\end{figure}

\section*{Acknowledgements}
This work was supported by a grant from the National Science Foundation,
Computational Mathematics Program, under Grant No. 1115978.


%
\end{document}